\documentclass[12pt]{amsart}

\usepackage{latexsym}
\usepackage{amssymb}
\usepackage{amsfonts}
\usepackage[colorlinks]{hyperref}

\headheight=7pt \textheight=574pt \textwidth=432pt \topmargin=14pt
\oddsidemargin=18pt \evensidemargin=18pt

\newtheorem{theorem}{Theorem}
\newtheorem{lemma}[theorem]{Lemma}

\theoremstyle{definition}

\theoremstyle{remark}
\newtheorem*{remark}{Remark}

\numberwithin{equation}{section}

\newcommand{\SL}{{\mathrm {SL}}}
\newcommand{\PSL}{{\mathrm {PSL}}}

\newcommand{\Irr}{{\mathrm {Irr}}}

\newcommand{\Stab}{{\mathrm {Stab}}}

\newcommand{\bZ}{\mathbf{Z}}
\newcommand{\bC}{\mathbf{C}}

\newcommand{\bF}{\mathbf{F}}

\newcommand{\Al}{\textup{\textsf{A}}}
\newcommand{\Sy}{\textup{\textsf{S}}}

\def\ttD#1{\mathord{{}^3\mathrm{D}_4(#1)}}
\def\tG#1{\mathord{{}^2\mathrm{G}_2(#1)}}
\def\tB#1{\mathord{{}^2\mathrm{B}_2(#1)}}

\def\psl#1#2{\mathord{\mathrm{PSL}_{#1}(#2)}}
\def\psu#1#2{\mathord{\mathrm{PSU}_{#1}(#2)}}

\begin{document}

\title[Character degree sums of finite groups]
{Character degree sums of finite groups}

\author{Attila Mar\'oti}
\address{Alfr\'{e}d R\'{e}nyi Institute of Mathematics, R\'{e}altanoda utca 13-15, H-1053,
Budapest, Hungary} \email{maroti.attila@renyi.mta.hu}

\author{Hung Ngoc Nguyen}
\address{Department of Mathematics, The University of Akron, Akron,
Ohio 44325, USA} \email{hungnguyen@uakron.edu}

\thanks{The research of the first author was supported by a Marie Curie
International Reintegration Grant within the 7th European
Community Framework Programme, by the J\'anos Bolyai Research
Scholarship of the Hungarian Academy of Sciences, and by OTKA
K84233.}

\subjclass[2010]{Primary 20C15, 20D10}

\keywords{Finite groups, character degrees, solvability,
supersolvability, nilpotency}

\date{\today}

\begin{abstract} We present some results on
character degree sums in connection with several important
characteristics of finite groups such as $p$-solvability,
solvability, supersolvability, and nilpotency. Some of them
strengthen known results in the literature.
\end{abstract}

\maketitle


\section{Introduction}

For a finite group $G$, let $\Irr(G)$ denote the set of irreducible
complex characters of $G$ and $T(G)$ the sum of the degrees of these
characters. That is, \[T(G):=\sum_{\chi\in\Irr(G)}\chi(1).\]
Character degree sums of finite groups have been studied extensively
by many authors. For example, Mann \cite{Mann} has shown that
$T(G)/|G|$ is bounded from below if and only if there exist normal
subgroups $N \leq M$ of $G$ so that $|N|$ and $|G/M|$ are bounded
and $M/N$ is abelian. Chapter~11 of the monograph \emph{`Characters
of Finite Groups'}~\cite{Berkovic-Zhmud} is devoted entirely to the
study of character degree sums and consists of several up-to-date
results. One of the highlighted theorems there is due to Berkovich
and Nekrasov -- it classifies all finite groups $G$ with
$T(G)/|G|>1/p$ where $p$ is the smallest prime divisor of $|G|$ such
that a Sylow $p$-subgroup of $G$ is not central. Note that all such
groups are solvable even when $p=2$.

Character degree sums provide a lot of information on the structure
of finite groups. For instance, it has been proved recently by
Isaacs, Loukaki, and Moret\'{o}~\cite{Isaacs-Loukaki-Moreto} and
Tong-Viet~\cite{Tong-Viet} that a finite group $G$ must be solvable
if $T(G)\leq 3 k(G)$ or $T(G)>(4/15) |G|$ where $k(G)$ denotes the
number of conjugacy classes of $G$. In~\cite{Isaacs-Loukaki-Moreto},
it was also proved that if $T(G)<(3/2)k(G)$ or $T(G)<(4/3)k(G)$,
then $G$ is respectively supersolvable or nilpotent.
In~\cite{Barry-MacHale}, Barry, MacHale, and N\'{\i} Sh\'{e}, by
using the classification of Berkovich and Nekrasov mentioned above,
proved that if $T(G)>(1/2)|G|$, then $G$ is supersolvable (more
precisely is nilpotent or has an abelian normal subgroup of index
$2$). In general, when the character degree sum of a finite group
$G$ is small in comparison with the class number of $G$ or is large
in comparison with the order of $G$, it is expected that $G$ is
close to abelian.

In this paper, we obtain more results in this direction. For
convenience, we will call the quantity $T(G)/|G|$ the
\emph{character degree sum ratio} of $G$. Our first result provides
a sufficient criterion for $p$-solvability of finite groups in terms
of the character degree sum ratio.

\begin{theorem}\label{main theorem p-solvable}
Let $G$ be a finite group and $p$ a prime. If $k(G)\geq(3/p^2)|G|$
then $G$ is $p$-solvable. Consequently, if $T(G) \geq
(\sqrt{3}/p)|G|$, then $G$ is $p$-solvable.
\end{theorem}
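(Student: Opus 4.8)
The plan is to deduce the second statement from the first and to establish the first by reducing, through a submultiplicativity argument, to a statement about non-abelian simple groups. For the passage from $k(G)$ to $T(G)$, I would first record the Cauchy--Schwarz estimate
\[
T(G)^2=\Big(\sum_{\chi\in\Irr(G)}\chi(1)\Big)^2\le k(G)\sum_{\chi\in\Irr(G)}\chi(1)^2=k(G)\,|G|,
\]
using $\sum_{\chi}\chi(1)^2=|G|$ and $|\Irr(G)|=k(G)$. Thus if $T(G)\ge(\sqrt3/p)|G|$ then $k(G)\ge T(G)^2/|G|\ge(3/p^2)|G|$, and the second assertion follows from the first.

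For the first assertion, set $r(H):=k(H)/|H|$ for a finite group $H$, noting that $0<r(H)\le1$ with equality exactly when $H$ is abelian. The engine of the reduction is the classical inequality $k(G)\le k(N)\,k(G/N)$ for $N\trianglelefteq G$ (due to Gallagher), which translates into submultiplicativity $r(G)\le r(N)\,r(G/N)$. Iterating along a composition series $1=G_0\lhd G_1\lhd\cdots\lhd G_n=G$ yields $r(G)\le\prod_{i}r(G_i/G_{i-1})$, a product over the (simple) composition factors of $G$. Since every factor satisfies $r(\cdot)\le1$, discarding all but one gives $r(G)\le r(S)$ for any single composition factor $S$.

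Now suppose, for contradiction, that $G$ is not $p$-solvable. Then not every composition factor can be a $p$-group or a $p'$-group, so some factor $S$ is a non-abelian simple group with $p\mid|S|$. It therefore suffices to prove the key lemma: for every non-abelian finite simple group $S$ and every prime $p$ dividing $|S|$,
\[
k(S)<\frac{3}{p^2}\,|S|,\qquad\text{equivalently}\qquad p^2\,k(S)<3\,|S|.
\]
Granting this, $r(G)\le r(S)<3/p^2$ contradicts the hypothesis $k(G)\ge(3/p^2)|G|$, so $G$ must be $p$-solvable.

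The main obstacle is the key lemma, which I would settle using the classification of finite simple groups, treating the families in turn: for $\Al_n$ one bounds $k(\Al_n)$ by the partition function against $n!/2$ with $p\le n$; the sporadic groups are a finite check from known class numbers; for Lie type one uses bounds of the form $k(S)\le c\,q^{\mathrm{rank}}$ against $|S|\approx q^{\dim}$. The decisive feature is that $3/p^2$ shrinks with $p$, so the binding constraint is the \emph{largest} prime divisor $P$ of $|S|$, and the truly delicate cases are twofold. First, the smallest simple groups, where $P$ is large relative to $|S|$: the extremal instance is $S=\Al_5=\PSL_2(5)$ with $p=5$, giving $p^2k(S)=125<180=3|S|$, the tightest case and the one forcing the constant to exceed $125/60$. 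Second, groups of Lie type carrying a large primitive prime divisor $\ell$ of $q^e-1$, where $\ell$ can be of size comparable to $q^e$ while $k(S)$ is only of order $q^{\mathrm{rank}}$; here one must verify $\ell^2k(S)<3|S|$ directly, and the comfortable factor $3$ (rather than $1$) is exactly what keeps the estimate safe, since already the $\PSL_2(q)$ computation drives the ratio $p^2k(S)/(3|S|)$ down toward $1/3$ as $q$ grows.
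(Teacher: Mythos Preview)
Your proposal is correct and follows essentially the same architecture as the paper: Cauchy--Schwarz to pass from $T(G)$ to $k(G)$, submultiplicativity of $k(\cdot)/|\cdot|$ along a composition series (this is Nagao's inequality, not Gallagher's), and then the key lemma $k(S)<(3/p^2)|S|$ for non-abelian simple $S$ with $p\mid|S|$. The only notable difference is in establishing the key lemma: rather than a uniform family-by-family check, the paper first applies the Guralnick--Robinson bound $d(S)\le|S|^{-1/2}$ to dispose of all simple groups satisfying $p^2/3<|S|^{1/2}$, leaving only $\PSL_2(q)$, $\PSL_3(q)$, $\PSU_3(q)$, $\tB{q}$, $\tG{q}$, $\ttD{q}$ to be handled by their explicit class-number formulas.
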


Next, we extend the main result of~\cite{Tong-Viet} on the relation
between character degree sums and solvability in finite groups. The
following may be compared with \cite[Theorem
11]{Guralnick-Robinson}.

\begin{theorem}\label{main theorem solvable} Let $G$ be a finite group. If $T(G)>(1/4)|G|$ then $G$ is solvable of Fitting height at most $4$
or $G = \Al_{5} \times Z$ for some abelian group $Z$.
\end{theorem}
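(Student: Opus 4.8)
The plan is to prove the contrapositive-flavored classification: assume $T(G)>(1/4)|G|$ and deduce the stated structure. The starting point is the result of Tong-Viet quoted in the introduction, namely that $T(G)>(4/15)|G|$ forces $G$ to be solvable. Since $1/4<4/15$, I would not get solvability for free, so the first task is to understand the non-solvable groups $G$ with $T(G)>(1/4)|G|$. The key reduction tool here is the submultiplicativity of the character degree sum ratio under direct products and its good behavior under passing to factor groups and normal subgroups: one knows that $T(G/N)\le T(G)$ for any normal subgroup $N$, and $T(G\times H)=T(G)T(H)$, while $T(G)/|G|$ tends to shrink as $G$ moves away from abelian. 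So I would first argue that if $G$ is non-solvable then some composition factor is a non-abelian simple group, and the only way to keep the ratio above $1/4$ is for that simple group to be ``cheap'', i.e.\ to have an unusually large character degree sum ratio.

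Concretely, the main step is a computation or estimate of $T(S)/|S|$ for the finite non-abelian simple groups $S$, showing that $\Al_5$ is essentially the unique simple group whose ratio is large enough to survive, and that even $\Al_5$ only survives in the very rigid form $G=\Al_5\times Z$. For $\Al_5$ one computes directly: the degrees are $1,3,3,4,5$, so $T(\Al_5)=16$ and $|\Al_5|=60$, giving ratio $16/60=4/15>1/4$; this is exactly the Tong--Viet threshold, which is why the bound $1/4$ lands just below it. For every other simple group one must show $T(S)/|S|\le 1/4$ (indeed much smaller), and more importantly that no non-split or decorated extension involving $\Al_5$ or any other simple group can have ratio exceeding $1/4$ except the bare direct product $\Al_5\times Z$. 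I would handle the simple groups using the classification, with uniform bounds of the type $T(S)\le |S|^{3/4}$-style estimates (or the sharper known asymptotic $T(S)\ll |S|^{1/2}\sqrt{k(S)}$ via Cauchy--Schwarz) for the groups of Lie type and sporadic groups, leaving only finitely many small cases to inspect by hand from the Atlas.

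Having pinned down that a non-solvable $G$ must be $\Al_5\times Z$ with $Z$ abelian, the remaining and more delicate half is the solvable case: I must show that a solvable $G$ with $T(G)>(1/4)|G|$ has Fitting height at most $4$. The natural approach is induction on the Fitting height using the Fitting series $1=F_0\le F_1\le\cdots\le F_h=G$, where $F_1=\mathbf{F}(G)$ is the Fitting subgroup. The idea is that each step up the Fitting series forces the ratio to drop by a definite multiplicative factor, so that too many steps would push $T(G)/|G|$ below $1/4$. To make this precise I would use Clifford theory to bound $T(G)$ in terms of $T$ of sections appearing in the Fitting quotients, together with the fact that a faithful action of $G/\mathbf{F}(G)$ on $\mathbf{F}(G)/\Phi(G)$ produces large character degrees and hence shrinks the ratio; results in the spirit of \cite[Theorem 11]{Guralnick-Robinson} relating Fitting height to character-theoretic data are exactly the leverage I would invoke.

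The hard part will be the solvable case, specifically producing a clean multiplicative decay estimate for the ratio per Fitting layer that is strong enough to cap the height at exactly $4$ rather than some larger constant, while still allowing the extremal configurations that achieve Fitting height $4$. Establishing the simple-group bounds is in principle routine given the classification and known character-degree data, but the induction in the solvable case requires a genuinely sharp inequality at each layer, and controlling the interaction between the abelian direct factor $Z$ and the action of the higher Fitting quotients is where I expect the argument to need the most care; a likely route is to first reduce to the case where $G$ has trivial center (or at least $Z(G)$ is factored out), prove the height bound there, and then recombine.
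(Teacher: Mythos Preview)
Your plan has the right instincts but misses the single reduction that makes the paper's proof short. The paper does \emph{not} work with $t(G)=T(G)/|G|$ directly; it immediately converts the hypothesis to one about the commuting probability $d(G)=k(G)/|G|$ via the Cauchy--Schwarz bound $t(G)\le\sqrt{d(G)}$ (Lemma~\ref{l00}), obtaining $d(G)>1/16$. This matters because $d$ --- unlike $t$ --- has the two hereditary properties one needs: Nagao's submultiplicativity $d(G)\le d(N)\,d(G/N)$ (Lemma~\ref{Nagao}) and the Guralnick--Robinson inequality $d(G)\le|G:\bF(G)|^{-1/2}$ (Lemma~\ref{l1}). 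The latter gives $|G:\bF(G)|<256$ at once, and the solvable case is then dispatched by a single GAP check (solvable groups of order $<256$ have Fitting height $\le3$, so $G$ has height $\le4$). There is no induction on the Fitting series and no ``multiplicative decay per layer'' argument.

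The genuine gap in your proposal is that the inequalities you are implicitly relying on are not available for $t$. Your non-solvable reduction (``the only way to keep the ratio above $1/4$ is for a simple composition factor to be cheap'') needs to bound $t(G)$ above by $t$ of a composition factor, and your solvable argument (``each step up the Fitting series forces the ratio to drop by a definite multiplicative factor'') needs a submultiplicative bound along a normal series; neither is a known property of $t$, and indeed passing to the quotient $G/\bF(G)$ can only \emph{increase} $t$, so the proposed induction runs in the wrong direction. Once one has $|G:\bF(G)|<256$ via $d$, the non-solvable analysis is finite: the solvable radical $S$ satisfies $G/S\in\{\Al_5,\Sy_5,\PSL(2,7)\}$; Nagao together with Gustafson's $d(S)\le5/8$ forces $S$ abelian and $G/S\cong\Al_5$; an orbit-counting argument on the action of $\Al_5$ on $\Irr(S)$ (using Lemma~\ref{l4}) forces $S=\bZ(G)$; and finally a Schur-multiplier analysis plus a direct character-table computation for $\SL(2,5)$ eliminates the nontrivial central product, leaving $G=\Al_5\times Z$. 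Your Atlas survey of $T(S)/|S|$ over all simple $S$ is not needed.
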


Theorem~\ref{main theorem solvable} shows that groups isoclinic to
$\Al_5$ have character degree sum ratio substantially larger than
other non-solvable groups. Two groups are said to be
\emph{isoclinic} if there are isomorphisms between their inner
automorphism groups and their derived subgroups such that the
isomorphisms are compatible with the commutator map. Any group
isoclinic to a simple group is isomorphic to a direct product of the
simple group with an abelian group but this is not true for
arbitrary groups. Since isoclinic groups have same proportions of
degrees of irreducible complex representations, we observe that the
character degree sum ratio is invariant under isoclinism, see
Theorem~\ref{T(G)/|G| is invariant under isoclinism }. It is known
that if $T(G)>(2/3)|G|$ then $G$ is nilpotent
(see~\cite[Chapter~11]{Berkovic-Zhmud}), and that the bound here
cannot be improved as $T(\Sy_3)=(2/3)|\Sy_3|$. Similar to
Theorem~\ref{main theorem solvable}, the following result shows that
the character degree sum ratio of $\Sy_3$ is substantially larger
than that of other non-nilpotent groups not isoclinic to $\Sy_3$.

\begin{theorem}\label{main theorem nilpotent} Let $G$ be a finite group. If $T(G)> (\sqrt{3/8})|G|$, then
$G$ is either abelian, isoclinic to a $2$-group, to a $3$-group, to
$\Sy_3$, or to $D_{10}$.
\end{theorem}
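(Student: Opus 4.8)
The plan is to prove the contrapositive structure: assuming $T(G) > \sqrt{3/8}\,|G|$, I want to force $G$ to lie in one of the listed isoclinism classes. Since the character degree sum ratio is invariant under isoclinism (Theorem~\ref{T(G)/|G| is invariant under isoclinism }), I may freely replace $G$ by any isoclinic group, and in particular I may assume $\bZ(G)$ is as small as the isoclinism class allows. The threshold $\sqrt{3/8} \approx 0.612$ sits below the nilpotency threshold $2/3$, so $G$ need not be nilpotent; the extra room between $\sqrt{3/8}$ and $2/3$ is exactly what admits $\Sy_3$ and $D_{10}$. My first step is to establish the basic inequality $T(G)^2 \leq k(G)\,|G|$ by Cauchy--Schwarz applied to $\sum_\chi \chi(1)$ against $\sum_\chi \chi(1)^2 = |G|$ with $k(G)$ terms. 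This converts the hypothesis into $k(G)/|G| > 3/8$, a lower bound on the class-number ratio that I can then exploit together with the known classification machinery.

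\smallskip

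Next I would reduce to small primes. The ratio $k(G)/|G| > 3/8 > 3/p^2$ already forces $p$-solvability for every $p \geq 3$ by Theorem~\ref{main theorem p-solvable}, and $T(G) > \sqrt{3/8}\,|G| > (1/4)|G|$ puts Theorem~\ref{main theorem solvable} in force, so $G$ is solvable (the $\Al_5 \times Z$ exception is killed since $T(\Al_5)/|\Al_5| = (1+3+3+4+5)/60 = 16/60 < \sqrt{3/8}$). Thus I only need to analyze solvable groups with large class-number ratio. For a solvable group the structure is controlled by its action on chief factors; the strategy is to show that if $G$ is neither abelian nor isoclinic to a $2$- or $3$-group, then $G$ has a nonabelian quotient forcing the ratio down. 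The cleanest route is to pass to a suitable minimal non-nilpotent or minimal non-abelian section and bound $k(G)/|G|$ from above there, using that $T$ is multiplicative over direct products and that an isoclinism class is pinned down once we know the abelianization and the commutator structure.

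\smallskip

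The heart of the argument is a sharp upper bound on $k(G)/|G|$ for the relevant non-$\{2,3\}$-groups. Here I would invoke or reprove the kind of estimate that if a finite group $G$ has an element of order $p \geq 5$ acting nontrivially, or more precisely if the nonabelian composition/chief data involves a prime $\geq 5$, then $k(G)/|G|$ drops below $3/8$. Since $G$ is solvable with all nonabelian action confined to primes $2$ and $3$ (by the $p$-solvability input for $p \geq 5$ combined with examining where noncentral Sylow subgroups sit), the offending cases reduce to interactions of a $2$-part and a $3$-part. The groups $\Sy_3$ (with ratio $2/3$) and $D_{10}$ must be identified as the unique non-nilpotent survivors above $\sqrt{3/8}$: for $D_{10} = C_5 \rtimes C_2$ one computes $T(D_{10})/|D_{10}| = (1+1+2+2)/10 = 6/10 = 3/5 > \sqrt{3/8}$, showing the bound is genuinely attained by a second family and explaining why $D_{10}$ appears despite involving the prime $5$ (the inversion action keeps the ratio high). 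I expect the main obstacle to be exactly this finite classification step: ruling out every other non-nilpotent solvable configuration, since one must show that any group whose isoclinism type differs from abelian, a $2$-group, a $3$-group, $\Sy_3$, or $D_{10}$ has $T(G)/|G| \leq \sqrt{3/8}$, and the competing candidates (such as $\Sy_3 \times C_p$, larger dihedral and Frobenius groups, and extraspecial-type groups) require case-by-case character-degree-sum computations to eliminate. Organizing these cases efficiently — probably via a reduction to groups with trivial Frattini subgroup and a careful analysis of the Fitting subgroup — will be where the real work lies.
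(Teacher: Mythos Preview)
Your opening move is exactly right: Cauchy--Schwarz gives $d(G)=k(G)/|G|>3/8$. But from there the paper takes a much shorter path than the one you sketch. The inequality $d(G)>3/8$ is precisely the regime covered by Rusin's classification of finite groups with large commuting probability (\emph{Pacific J.\ Math.}~\textbf{82} (1979), 237--247). Rusin's table on page~246 says that $d(G)>3/8$ forces one of four configurations: $G/\bZ(G)$ and $G'$ are both $2$-groups; both are $3$-groups; $G/\bZ(G)\cong\Sy_3$ with $|G'|=3$; or $G/\bZ(G)\cong D_{10}$ with $|G'|=5$. At this point the isoclinism reduction (Lemma~\ref{isoclinic lemma}) lets one assume $\bZ(G)\leq G'$, which immediately pins down each case to a $2$-group, a $3$-group, $\Sy_3$, or $D_{10}$ respectively. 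That is the entire proof.

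Your proposed route --- invoking Theorems~\ref{main theorem p-solvable} and~\ref{main theorem solvable} to get solvability, then attacking the remaining solvable groups by hand --- is not wrong in spirit, but it has two costs. First, both of those theorems depend on the classification of finite simple groups, whereas the paper advertises (and achieves) a CFSG-free proof of this theorem via Rusin. Second, the ``finite classification step'' you flag as the main obstacle is essentially a reproof of Rusin's result restricted to solvable groups; you have not supplied it, and your heuristic that ``nonabelian action is confined to primes $2$ and $3$'' already breaks on $D_{10}$, as you yourself note. So the proposal as it stands has a genuine gap at exactly the point where the paper simply cites a known theorem. The fix is not to carry out your case analysis but to recognize that the condition $d(G)>3/8$ has been classified in the literature and to quote it.
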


As mentioned already, it was proved in~\cite{Barry-MacHale} that if
$T(G)>(1/2)|G|$ then $G$ is supersolvable by using the long and
complicated classification of finite groups $G$ with $T(G)/|G|>1/p$
where $p$ is the smallest prime divisor of $|G|$ such that a Sylow
$p$-subgroup of $G$ is not central. We present here a short proof of
this fact that is independent and indeed fundamentally different
from the classification of Berkovich and Nekrasov.

\begin{theorem}\label{main theorem supersolvable} Let $G$ be a finite group. If $T(G)>(1/2)|G|$, then $G$
is supersolvable.
\end{theorem}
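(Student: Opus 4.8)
The plan is to argue by induction on $|G|$ and derive a contradiction from a minimal counterexample: suppose $T(G)>|G|/2$ but $G$ is not supersolvable, with $|G|$ least possible. First I would establish solvability. Since $1/2>1/4$, Theorem~\ref{main theorem solvable} applies, and its second alternative $G=\Al_5\times Z$ is ruled out because $T(\Al_5\times Z)/|\Al_5\times Z|=T(\Al_5)/|\Al_5|=16/60=4/15<1/2$. Hence $G$ is solvable and has a minimal normal subgroup $N$, necessarily elementary abelian of order $p^a$ for some prime $p$. Using the (standard) submultiplicativity $T(G)\le T(N)\,T(G/N)$ together with $T(N)=|N|$ (as $N$ is abelian), I obtain $T(G/N)\ge T(G)/|N|>|G/N|/2$, so by minimality $G/N$ is supersolvable. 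If $a=1$, then $N$ is cyclic of prime order and normal in $G$, and refining a chief series of $G/N$ through $N$ exhibits a chief series of $G$ with all factors of prime order; thus $G$ is supersolvable, a contradiction. It therefore remains to rule out $a\ge 2$.

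For $a\ge 2$ I would prove the reverse inequality $T(G)\le|G|/2$. Decompose $\Irr(G)$ by Clifford theory according to the $G$-orbits on $\Irr(N)$. For an orbit $\mathcal{O}$ with representative $\lambda$ and inertia group $I=I_G(\lambda)$, the Clifford correspondence gives $\sum_{\chi\in\Irr(G\mid\mathcal{O})}\chi(1)=[G:I]\,T(I\mid\lambda)$, where $T(I\mid\lambda)=\sum_{\eta\in\Irr(I\mid\lambda)}\eta(1)$. Since $\lambda$ is linear and $I$-invariant, character-triple theory yields $\sum_{\eta\in\Irr(I\mid\lambda)}\eta(1)^2=[I:N]$, whence $T(I\mid\lambda)\le[I:N]$ and the orbit contributes at most $[G:I][I:N]=[G:N]$. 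Summing over the $r$ orbits of $G$ on $\Irr(N)$ gives the clean bound $T(G)\le r\,[G:N]=r\,|G|/p^a$, so it suffices to show $r\le p^a/2$.

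Here I would use that $\Irr(N)$, dual to the chief factor $N$, is an irreducible $\FP$-module of dimension $a\ge 2$ for $\bar G:=G/C_G(N)\le\GL_a(\FP)$; in particular there is no nonzero fixed point, so every nontrivial orbit has length $\ge 2$. When $p=2$ the nontrivial points number $2^a-1$, which is odd, immediately forcing $r\le 2^{a-1}=p^a/2$. When $p$ is odd, the inequality $r>p^a/2$ could hold only if every nontrivial orbit had length exactly $2$; but then every point stabilizer is normal of index $2$, so $\bar G$ embeds in an elementary abelian $2$-group, is simultaneously diagonalizable with eigenvalues $\pm1$, and hence is reducible — contradicting irreducibility for $a\ge 2$. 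Thus $r\le p^a/2$ in every case, giving $T(G)\le|G|/2$, contrary to hypothesis. This contradiction forces $a=1$ and completes the induction.

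I expect the main obstacle to be this final orbit-counting step, particularly the odd-$p$ boundary argument that an irreducible $\bar G$ cannot have all nontrivial orbits of length $2$. The example $G=\Al_4$, with $T(\Al_4)/|\Al_4|=1/2$ and minimal normal subgroup $N=V_4$ of order $2^2$, shows that the bound $r\le p^a/2$ and hence $T(G)\le|G|/2$ is sharp, which is precisely why the hypothesis $T(G)>|G|/2$ must be strict and why no weaker ratio can suffice.
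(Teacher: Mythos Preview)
Your argument is correct and takes a genuinely different, more elementary route than the paper's. Both proofs set up a minimal counterexample, reduce to $G$ solvable, pick a minimal normal subgroup $N$ of order $p^a$, and show $G/N$ is supersolvable by induction. From there the approaches diverge. The paper takes $N\le G'$, uses the Frattini argument to split $G=N\rtimes M$, rules out orbits of length~$2$ by an ad~hoc argument, and for the remaining case invokes the Aschbacher--Guralnick bound $|M/C:(M/C)'|<|N|$ together with a chain of inequalities involving $T(C)$ to contradict the reformulation $\sum_{\chi(1)\ge 3}(\chi(1)^2-2\chi(1))<|G:G'|$. Your approach bypasses all of this: the clean bound $T(G)\le r\,[G:N]$ (from $T(I\mid\lambda)\le\sum\eta(1)^2=[I:N]$) reduces everything to the orbit count $r\le p^a/2$, which you settle by elementary linear algebra over $\FP$. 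This is shorter and avoids both the complement construction and the Aschbacher--Guralnick input.

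Two small points. First, the ``standard'' submultiplicativity $T(G)\le T(N)\,T(G/N)$ is not as immediate as you suggest and deserves a reference or a line of proof; the paper instead gives a direct argument (its Lemma on $\sum(\chi(1)^2-2\chi(1))$) valid for $N\le G'$, and since for $a\ge 2$ one necessarily has $N\le G'$ (otherwise $[N,G]\le N\cap G'=1$ forces $N$ central, hence $a=1$), you could simply choose $N\le G'$ from the start and quote that computation. Second, invoking Theorem~\ref{main theorem solvable} for solvability imports the classification of finite simple groups, whereas the paper deliberately cites Tong-Viet's theorem here to keep the proof of Theorem~\ref{main theorem supersolvable} classification-free; you may wish to do the same.
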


We note that the bound in this theorem cannot be improved since
$T(\Al_4)=(1/2)|G|$. It would be interesting if one can show that
groups isoclinic to $\Al_4$ have significantly larger character
degree sum ratio than other non-supersolvable groups.

To end this introduction, we remark that our proofs of
Theorems~\ref{main theorem p-solvable} and~\ref{main theorem
solvable}, which are carried out respectively in
Sections~\ref{section p-solvable} and~\ref{section solvable}, rely
on the classification of finite simple groups. On the other hand,
proofs of Theorems~\ref{main theorem nilpotent} and~\ref{main
theorem supersolvable} in Sections~\ref{section nilpotent}
and~\ref{section supersovable} are classification-free.


\section{Preliminaries}

There are two well-known bounds for $T(G)$, the sum of the complex
irreducible character degrees of a finite group $G$. On one hand, by
a formula of Frobenius and Schur (see~\cite[Corollary~4.6]{Isaacs}),
$T(G)$ can be bounded from below by the number $I(G)$ of elements of
$G$ of orders dividing $2$. On the other hand, using the
Cauchy-Schwarz inequality, we have $T(G) \leq \sqrt{k(G) |G|}$ where
we recall that $k(G)$ is the number of complex irreducible
characters of $G$. By introducing the notations $t(G) = T(G)/|G|$,
$i(G) = I(G)/|G|$, and $d(G) = k(G)/|G|$, these inequalities can be
stated as follows.

\begin{lemma}
\label{l00} For a finite group $G$ we have $i(G) \leq t(G) \leq
\sqrt{d(G)}$.
\end{lemma}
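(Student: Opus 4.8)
The plan is to prove the two inequalities separately, since each is an absolute statement about $I(G)$, $T(G)$, and $k(G)$ that becomes the claimed ratio inequality after dividing through by $|G|$.

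For the lower bound $i(G) \leq t(G)$, equivalently $I(G) \leq T(G)$, I would use the Frobenius--Schur count of involutions. Recall that to each $\chi \in \Irr(G)$ one attaches its Frobenius--Schur indicator $\nu(\chi) \in \{-1, 0, 1\}$, and that the number of elements $g \in G$ with $g^2 = 1$ is given by $I(G) = \sum_{\chi \in \Irr(G)} \nu(\chi)\chi(1)$. Since $\nu(\chi) \leq 1$ for every $\chi$ and each degree $\chi(1)$ is positive, we get
\[
I(G) = \sum_{\chi \in \Irr(G)} \nu(\chi)\chi(1) \leq \sum_{\chi \in \Irr(G)} \chi(1) = T(G).
\]
Dividing by $|G|$ yields $i(G) \leq t(G)$.

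For the upper bound $t(G) \leq \sqrt{d(G)}$, equivalently $T(G) \leq \sqrt{k(G)\,|G|}$, I would apply the Cauchy--Schwarz inequality to the $k(G)$ degrees, viewing $T(G)$ as the inner product of the degree vector with the all-ones vector. Using the standard identity $\sum_{\chi \in \Irr(G)} \chi(1)^2 = |G|$ and the fact that $|\Irr(G)| = k(G)$, this gives
\[
T(G) = \sum_{\chi \in \Irr(G)} 1 \cdot \chi(1) \leq \Bigl(\sum_{\chi \in \Irr(G)} 1\Bigr)^{1/2}\Bigl(\sum_{\chi \in \Irr(G)} \chi(1)^2\Bigr)^{1/2} = \sqrt{k(G)}\,\sqrt{|G|}.
\]
Dividing by $|G|$ and simplifying the right-hand side to $\sqrt{k(G)/|G|} = \sqrt{d(G)}$ completes the chain $i(G) \leq t(G) \leq \sqrt{d(G)}$.

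Neither direction presents a genuine obstacle, as both reduce to a single classical identity paired with an elementary estimate; the only points requiring a moment of care are the boundedness $\nu(\chi) \leq 1$ of the Frobenius--Schur indicator (which makes the lower estimate essentially trivial) and correctly matching the two factors in Cauchy--Schwarz so that the sum of squares of degrees equals $|G|$. It is worth noting in passing that equality in the upper bound forces all nontrivial degrees to coincide, which is the sort of rigidity that the later theorems exploit.
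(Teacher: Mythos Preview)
Your proof is correct and follows exactly the approach the paper indicates: the Frobenius--Schur involution count for $I(G)\le T(G)$ and Cauchy--Schwarz together with $\sum_\chi \chi(1)^2=|G|$ for $T(G)\le\sqrt{k(G)\,|G|}$. The paper merely cites these two facts without spelling out the details, so your write-up is in fact a fuller version of the same argument.
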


The invariant $d(G)$ is indeed the probability that a randomly
chosen pair of elements of $G$ commute. That is,
\[d(G)=\frac{1}{|G|^2}|\{(x,y)\in G\times G\mid xy=yx\}|.\]
This quantity is often referred to as the \emph{commuting
probability} (or \emph{commutativity degree}) of $G$. The study of
the commuting probability of finite groups dates back at least to
work of Gustafson in the seventies. One of the earliest results is
the following.

\begin{lemma}[Gustafson \cite{Gustafson}]
\label{Gustafson} If $G$ is a non-abelian finite group then $d(G)
\leq 5/8$.
\end{lemma}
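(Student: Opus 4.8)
The plan is to reinterpret the commuting probability $d(G)$ through centralizers and then extract the bound from two elementary facts: a proper centralizer has index at least $2$, and $G/\bZ(G)$ is never cyclic when $G$ is non-abelian.

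First I would rewrite
\[
d(G) = \frac{1}{|G|^2}\,\bigl|\{(x,y)\in G\times G \mid xy=yx\}\bigr| = \frac{1}{|G|^2}\sum_{x\in G}|\bC_G(x)|,
\]
using that for each fixed $x$ the set of $y$ commuting with $x$ is precisely $\bC_G(x)$. (Grouping this sum by conjugacy class recovers the identity $d(G)=k(G)/|G|$ mentioned in the text, but the centralizer form is what drives the estimate.)

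Next I would split the sum according to whether $x$ lies in the center $Z:=\bZ(G)$. For $x\in Z$ one has $\bC_G(x)=G$, contributing $|Z|\cdot|G|$ in total. For $x\notin Z$ the centralizer is a proper subgroup, so $|\bC_G(x)|\leq|G|/2$, and there are exactly $|G|-|Z|$ such elements. This gives
\[
\sum_{x\in G}|\bC_G(x)| \leq |Z|\cdot|G| + (|G|-|Z|)\cdot\frac{|G|}{2} = |G|\cdot\frac{|G|+|Z|}{2},
\]
whence $d(G)\leq\tfrac12\bigl(1+[G:Z]^{-1}\bigr)$.

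The decisive step, and the only point requiring a real idea, is to bound $[G:Z]$ from below. I would invoke the standard lemma that if $G/\bZ(G)$ is cyclic then $G$ is abelian: for non-abelian $G$ the quotient $G/Z$ is therefore non-cyclic, and since the smallest non-cyclic group has order $4$, we obtain $[G:Z]\geq 4$. Substituting yields $d(G)\leq\tfrac12\bigl(1+\tfrac14\bigr)=\tfrac58$, as required. The inequality $[G:Z]\geq 4$ is where all the sharpness lives—one must rule out $[G:Z]\in\{2,3\}$, which is immediate because a group of order $2$ or $3$ is cyclic and so cannot arise as $G/\bZ(G)$ for non-abelian $G$.
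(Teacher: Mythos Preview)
Your argument is correct and is in fact the classical proof due to Gustafson. The paper does not give its own proof of this lemma; it simply states the result with a citation to \cite{Gustafson}, so there is nothing to compare against beyond noting that your proof is the standard one. Every step checks out: the centralizer reformulation of $d(G)$, the split over $Z=\bZ(G)$ with the bound $|\bC_G(x)|\leq |G|/2$ for $x\notin Z$, the resulting inequality $d(G)\leq\tfrac12(1+[G:Z]^{-1})$, and the use of the fact that $G/\bZ(G)$ cannot be nontrivial cyclic to force $[G:Z]\geq 4$.
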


In 1962, Nagao \cite{Nagao} showed that for a normal subgroup $N$ of
a finite group $G$ we have $k(G) \leq k(N) k(G/N)$. This implies the
following useful result.

\begin{lemma}
\label{Nagao} For a normal subgroup $N$ of a finite group $G$ we
have $d(G) \leq d(N) d(G/N)$.
\end{lemma}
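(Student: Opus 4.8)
The plan is to deduce the multiplicativity of $d(G)$ from Nagao's bound $k(G)\le k(N)k(G/N)$ by dividing through by the orders. The key observation is that group order is multiplicative across a normal subgroup and its quotient, namely $|G|=|N|\cdot|G/N|$, which meshes perfectly with the definition $d(G)=k(G)/|G|$.

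Concretely, I would start from Nagao's inequality
\[
k(G)\le k(N)\,k(G/N).
\]
Dividing both sides by $|G|=|N|\cdot|G/N|$ gives
\[
\frac{k(G)}{|G|}\le\frac{k(N)}{|N|}\cdot\frac{k(G/N)}{|G/N|},
\]
and recognizing each factor as the corresponding commuting probability yields
\[
d(G)\le d(N)\,d(G/N),
\]
which is exactly the claim. So the entire argument is a one-line rescaling of Nagao's result, using that the index $|G/N|$ equals the order of the quotient group.

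There is essentially no obstacle here, since Nagao's theorem is quoted as given and the factorization $|G|=|N|\,|G/N|$ is elementary (Lagrange). The only point deserving a moment's care is confirming that the denominators align: one must check that $|G/N|$, the order of the quotient group, is genuinely the integer by which we divide $k(G/N)$, so that the product $d(N)d(G/N)$ reassembles the denominator $|N|\,|G/N|=|G|$. This is immediate, and I would state it explicitly to make the proof self-contained rather than leave it implicit.
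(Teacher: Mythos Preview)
Your argument is correct and matches the paper's approach exactly: the paper simply states that Nagao's inequality $k(G)\le k(N)\,k(G/N)$ implies the lemma, and your division by $|G|=|N|\cdot|G/N|$ is precisely the intended one-line deduction.
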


One of the deepest results on the commuting probability of a finite
group is due to Guralnick and Robinson.

\begin{lemma}[Guralnick and Robinson~\cite{Guralnick-Robinson}]
\label{l1} Let $\bF(G)$ be the Fitting subgroup of a finite group
$G$. Then $d(G) \leq {|G:\bF(G)|}^{-1/2}$.
\end{lemma}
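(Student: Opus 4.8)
The plan is to recast the inequality in a purely class-number form and then induct. Since $d(G) = k(G)/|G|$, the bound $d(G) \le |G:\bF(G)|^{-1/2}$ is equivalent, after squaring and clearing denominators, to
\[
k(G)^2 \le |G|\cdot|\bF(G)|.
\]
This is the statement I would prove by induction on $|G|$. Two features are worth noting at the outset. First, when $G$ is nilpotent one has $\bF(G) = G$ and $k(G) \le |G|$, so the inequality is immediate (with equality exactly for abelian $G$); this anchors the induction. Second, and more importantly, the inequality is \emph{strong precisely when $\bF(G)$ is small}: in the extreme case $\bF(G) = 1$ it asserts $k(G) \le |G|^{1/2}$, a nontrivial bound on the number of conjugacy classes of a group with no nontrivial normal nilpotent subgroup. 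The two regimes, $\bF(G)$ large versus $\bF(G)$ trivial, therefore require genuinely different arguments.

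For the inductive step with $F := \bF(G) \neq 1$, the main tool is Clifford theory relative to $F$. The group $G$ permutes $\Irr(F)$ by conjugation, and decomposing $\Irr(G)$ over these orbits gives
\[
k(G) = \sum_{\theta} |\Irr(G\mid\theta)|,
\]
the sum running over a set of $G$-orbit representatives $\theta \in \Irr(F)$. For each $\theta$ the characters of $G$ lying over it are controlled by the inertia group $I_G(\theta)$: a character-triple (projective representation) count yields $|\Irr(G\mid\theta)| \le k(I_G(\theta)/F)$. The orbit sizes are governed by the complementary identity $\sum_{\theta} |G:I_G(\theta)| = |\Irr(F)| = k(F) \le |F|$, coming from orbit--stabilizer. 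I would then feed the inductive hypothesis into each proper section $I_G(\theta)/F$ and balance the two sums, the one bounding the terms $k(I_G(\theta)/F)$ and the one bounding the orbit sizes, by the Cauchy--Schwarz inequality, so that the geometric mean $\sqrt{|G|\,|F|}$ emerges. The trivial-character orbit $\theta = 1_F$ contributes the term $k(G/F)$, which is absorbed by applying the induction to $G/F$. Making the Cauchy--Schwarz weighting produce exactly the exponent $1/2$, rather than something weaker, is the delicate bookkeeping here.

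The hard part, I expect, is the base case $\bF(G) = 1$, where the Clifford reduction over $F$ is vacuous. Here the generalized Fitting subgroup collapses to the layer, $\bF^*(G) = E(G)$, a central product of quasisimple components permuted by $G$, and one must prove $k(G) \le |G|^{1/2}$ directly. This forces one to understand how the components and the permutation action of $G$ on them contribute to the class number, and ultimately reduces to sharp upper bounds for the class numbers of quasisimple groups; this is the representation-theory-of-simple-groups heavy ingredient, where strong (classification-dependent) estimates are expected to be needed. A secondary technical nuisance throughout is that $\bF$ is not well behaved under quotients, since $\bF(G/\bF(G))$ can be nontrivial, so the induction cannot proceed by naively factoring out $\bF(G)$ once; it must be organized around the inertia sections $I_G(\theta)/F$ (equivalently, around the ascending Fitting series), which is what keeps the two regimes from being conflated.
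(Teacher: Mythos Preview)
The paper does not give a proof of this lemma. It is quoted as a result of Guralnick and Robinson, with only a citation, and immediately afterwards the authors remark that the proof in \cite{Guralnick-Robinson} depends on the classification of finite simple groups. So there is no argument in the paper to compare your proposal against.

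For what it is worth, your sketch has the right architecture and matches the shape of the Guralnick--Robinson argument: the reformulation $k(G)^{2}\le |G|\cdot|\bF(G)|$ is exactly what is proved there, the base case $\bF(G)=1$ (that is, $k(G)\le|G|^{1/2}$ for groups with trivial Fitting subgroup) is indeed the step that invokes the classification, and Clifford theory over $\bF(G)$ together with Gallagher's bound is the natural reduction mechanism. Where your outline is genuinely incomplete is the inductive step. Applying the inductive hypothesis to a section $I_G(\theta)/F$ yields a bound involving $|\bF(I_G(\theta)/F)|$, a quantity you have no control over, and you do not explain how a Cauchy--Schwarz weighting of these terms against the orbit sizes $|G:I_G(\theta)|$ produces exactly $\sqrt{|G|\,|F|}$; a naive attempt via Nagao's inequality along the Fitting series, for instance, only gives $k(G)^{2}\le |G|\cdot|F_\infty|$ with $F_\infty$ the terminal Fitting term, which is too weak. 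You acknowledge this difficulty in your last paragraph but do not resolve it, so as it stands the proposal is a plausible plan rather than a proof.
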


Two remarks are in order. First, the proof of Lemma~\ref{l1}
in~\cite{Guralnick-Robinson} depends on the classification. As our
proof of Theorem~\ref{main theorem solvable} uses Lemma~\ref{l1} in
both solvable and non-solvable cases, it depends on the
classification as well. Second, Neumann~\cite{Neumann} has shown
that if $d(G)$ is bounded from below by some real positive number
$r$ then $G$ contains a normal subgroup $H$ so that $|G:H|$ and $H'$
are bounded by some function of $r$. Lemma~\ref{l1} implies that
this subgroup $H$ can be taken to be nilpotent.

Finally, the following important result will also be used.

\begin{lemma}[Gallagher \cite{Ga}]
\label{l4} Let $G$ be a finite group, $N$ be a normal subgroup in
$G$, $\chi$ be an irreducible character of $N$, and $I(\chi)$ be
its inertia subgroup. Then the number of irreducible characters of
$G$ which lie over ($G$-conjugates of) $\chi$ is at most
$k(I(\chi)/N)$.
\end{lemma}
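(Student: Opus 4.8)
The plan is to reduce the count, via the Clifford correspondence, to a count of characters of the inertia group lying over $\chi$, and then to evaluate that count through projective representations. Set $T := I(\chi)$. First I would observe that by Clifford's theorem the restriction to $N$ of any $\psi \in \Irr(G)$ lying over $\chi$ is a multiple of the sum of the full $G$-orbit of $\chi$; hence $\psi$ lies over $\chi$ if and only if it lies over any prescribed $G$-conjugate of $\chi$. Consequently the characters counted in the statement are exactly those in $\Irr(G \mid \chi)$, and it suffices to bound $|\Irr(G \mid \chi)|$.

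Next I would invoke the Clifford correspondence: induction $\psi \mapsto \Ind_T^G \psi$ is a bijection from $\Irr(T \mid \chi)$ onto $\Irr(G \mid \chi)$. This reduces the problem to showing $|\Irr(T \mid \chi)| \le k(T/N)$, where now $\chi$ is $T$-invariant by the very definition of the inertia group.

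To handle the $T$-invariant case I would pass to projective representations (equivalently, replace the character triple $(T,N,\chi)$ by an isomorphic one in which $N$ is central and $\chi$ is linear). Choosing a projective representation of $T$ that restricts to an ordinary representation of $N$ affording $\chi$ produces a factor set $\alpha$ inflated from $T/N$, and the standard correspondence matches $\Irr(T \mid \chi)$ bijectively with the irreducible $\alpha$-projective representations of $T/N$. The number of the latter equals the dimension of the center of the twisted group algebra $\CC^{\alpha}[T/N]$, namely the number of $\alpha$-regular conjugacy classes of $T/N$. Since the $\alpha$-regular classes form a subset of all conjugacy classes, this number is at most $k(T/N)$.

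Chaining the three steps identifies the number of characters over $G$-conjugates of $\chi$ with $|\Irr(G \mid \chi)| = |\Irr(T \mid \chi)| \le k(T/N)$, as required. I expect the main obstacle to be this last step: the naive restriction-and-induction bookkeeping only yields $\sum_{\psi} e_\psi^2 = |T/N|$ (writing $\psi_N = e_\psi \chi$), which gives the far weaker bound $|T/N|$ on the number of such characters. Extracting the sharp bound by the class number $k(T/N)$ genuinely requires the projective-representation count of $\alpha$-regular classes rather than any degree estimate; the one simplification is that over $\CC$ the relevant twisted group algebra is automatically semisimple, so counting its simple modules by the dimension of the center is unobstructed.
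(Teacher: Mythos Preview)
The paper does not actually supply a proof of this lemma: it is quoted as a result of Gallagher and cited without argument. Your proposed proof is correct and is essentially the standard one---reduce to the inertia group via the Clifford correspondence, then identify $\Irr(T\mid\chi)$ with the irreducible $\alpha$-projective representations of $T/N$ and bound their number by the number of $\alpha$-regular classes, hence by $k(T/N)$. There is nothing to compare against in the paper itself.
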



\section{$p$-solvability}\label{section p-solvable}

This section is devoted to proving Theorem~\ref{main theorem
p-solvable}. We need two lemmas.

\begin{lemma}
\label{l88} If $G$ is a non-abelian finite simple group whose
order is divisible by a prime $p$ then $p^{2}/3 < |G|^{1/2}$
unless possibly if $G$ is $\psl{2}{q}$, $\psl{3}{q}$,
$\psu{3}{q}$, $\tB{q}$, $\tG{q}$, or $\ttD{q}$ for some prime
power $q$.
\end{lemma}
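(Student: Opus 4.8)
The plan is to prove that for every nonabelian finite simple group $G$ of order divisible by a prime $p$, the inequality $p^2/3 < |G|^{1/2}$ holds, apart from the listed low-rank exceptions. Since $p$ divides $|G|$, we certainly have $p \leq |G|^{1/2}$ whenever $p^2 \leq |G|$, but this crude bound is not enough: we need the stronger statement that $p^2 < (3)|G|^{1/2}$, equivalently $p^4 < 9|G|$. The key elementary observation I would isolate first is that for a prime $p$ dividing $|G|$, one has $p \leq |G|_p \leq |G|^{1/2}$ is too weak, so instead I would bound $p$ by the largest prime divisor and compare it against the order. The natural strategy is to split into two cases by the classification of finite simple groups: the alternating and sporadic groups, which form a finite or easily-parametrized family, and the groups of Lie type, where we must control the ratio between the order and the largest prime divisor.

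For the alternating groups $\Al_n$ with $n \geq 5$ and the sporadic groups, I would verify the inequality directly. A prime $p$ dividing $|\Al_n| = n!/2$ satisfies $p \leq n$, so it suffices to check $n^4 < (9/4)\,n!$ for all $n \geq 5$, which is immediate for small $n$ and clear asymptotically; the finitely many sporadic groups are handled by a table lookup, since for each the largest prime divisor $p$ and the order are known explicitly and one checks $p^4 < 9|G|$ case by case.

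The main work is the groups of Lie type, and here is where the exceptions arise. For a simple group $G$ of Lie type defined over $\FQ$, the order $|G|$ is a polynomial in $q$ whose degree equals the number of positive roots (roughly the dimension count), while any prime $p$ dividing $|G|$ is bounded by the largest cyclotomic-polynomial value dividing $|G|$, hence essentially $p \leq q^{h} + \text{lower order}$ where $h$ is at most the Coxeter number. The inequality $p^4 < 9|G|$ will hold comfortably once $|G|$ grows like a sufficiently high power of $q$ relative to $p^4$; concretely I would use that $p \leq \Phi_d(q) \leq q^{\phi(d)} + \cdots$ for the relevant cyclotomic factor, and compare $4\phi(d)$ against the degree of $|G|$ as a polynomial in $q$. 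For groups of large rank the degree of $|G|$ dwarfs $4\phi(d)$, so the inequality holds. The delicate cases are precisely the groups of small rank, where $|G|^{1/2}$ is a low-degree polynomial in $q$: the families $\psl{2}{q}$, $\psl{3}{q}$, $\psu{3}{q}$, $\tB{q}$, $\tG{q}$, and $\ttD{q}$ are exactly those where a large prime divisor (a primitive prime divisor of $q^e-1$ with $e$ close to the polynomial degree) can make $p^2/3$ comparable to or exceeding $|G|^{1/2}$.

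The hard part will be making the Lie-type estimate uniform and identifying that the listed six families are the complete set of exceptions, rather than checking each family ad hoc. I would organize this by treating $G$ as a section of the universal version and using the standard order formulas: write $|G| = \frac{1}{d}\, q^{N} \prod_i (q^{d_i} - \epsilon_i)$ where $N$ is the number of positive roots and the $d_i$ are the degrees, then bound the largest prime $p$ dividing $|G|$ by $\max_i (q^{d_i}+1)$ via Zsygmondy's theorem on primitive prime divisors. The inequality $p^4 < 9|G|$ then reduces to a comparison of exponents of $q$, namely $4 \max_i d_i$ against $N + \sum_i d_i$, with a bounded correction handled by noting $q \geq 2$. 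For the high-rank groups this comparison is strict with room to spare, so only finitely many small-rank families can fail it; I would then check those remaining small-rank families individually to confirm that exactly $\psl{2}{q}$, $\psl{3}{q}$, $\psu{3}{q}$, $\tB{q}$, $\tG{q}$, and $\ttD{q}$ must be excluded, the remaining small-rank groups (such as $\psu{4}{q}$, $\Sp_4(q)$, $\mathrm{G}_2(q)$, and so on) satisfying the bound for all $q$.
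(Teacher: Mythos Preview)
Your approach is essentially what the paper has in mind: the paper's entire proof is the single sentence ``This is elementary computation using the list of orders of finite simple groups,'' so your case split via CFSG and comparison of the largest prime divisor against the polynomial degree of $|G|$ in $q$ is exactly the intended (but unwritten) computation.

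One small slip to flag: your alternating-group check claims $n^{4} < (9/4)\,n!$ for all $n \geq 5$, but at $n=5$ this fails ($625 > 270$), and indeed $\Al_5$ genuinely violates $p^{4} < 9|G|$ since $5^{4} = 625 > 540 = 9 \cdot 60$. This is harmless for the lemma because $\Al_5 \cong \psl{2}{4} \cong \psl{2}{5}$ already sits in the exception list, but you should note that the alternating argument starts cleanly only at $n \geq 6$ (where $6^4 = 1296 < 1620 = (9/4)\cdot 720$) and that $\Al_5$ is absorbed into the $\psl{2}{q}$ family.
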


\begin{proof}
This is elementary computation using the list of orders of finite
simple groups found in~\cite[pages 170-171]{KL}, say.
\end{proof}

\begin{lemma}
\label{l89} Let $p$ be a prime divisor of the order of a non-abelian
finite simple group $G$. Then we have the following:
\begin{enumerate}
\item Let $G = \psl{2}{q}$. If $q \geq 4$ is even then $d(G) =
1/((q-1)q) < 3/p^{2}$. If $q$ is odd then $d(G) = (q+5)/((q^{2}-1)q)
< 3/p^{2}$.

\item Let $G = \psl{3}{q}$. If $q \geq 4$ then $$d(G) \leq
(q^{2}+3q)/((1/3)q^{3}(q^{2}-1)(q^{3}-1)) < 3/p^{2}.$$ Furthermore
$d(\psl{3}{3}) = 1/468 < 3/169$.

\item Let $G = \psu{3}{q}$. Then $d(G) \leq
(q^{2}+q+2)/((1/d)q^{3}(q^{2}-1)(q^{3}+1)) < 3/p^{2}$ where $d$ is
$3$ if $3$ divides $q+1$ and is $1$ otherwise.

\item Let $G = \tB{q}$. Then $d(G) =
(q+3)/(q^{2}(q^{2}+1)(q-1)) < 3/p^{2}$.

\item Let $G = \tG{q}$. Then $d(G) =
(q+8)/(q^{3}(q^{3}+1)(q-1)) < 3/p^{2}$.

\item Let $G = \ttD{q}$. Then $$d(G) =
(q^{4}+q^{3}+q^{2}+q+6)/(q^{12}(q^{8} +
q^{4}+1)(q^{6}-1)(q^{2}-1)) < 3/p^{2}.$$
\end{enumerate}
\end{lemma}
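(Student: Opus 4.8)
The plan is to prove Lemma~\ref{l89} by computing $d(G)=k(G)/|G|$ explicitly for each of the six infinite families, since for each family the conjugacy class number $k(G)$ is a known polynomial in $q$ (available in standard references such as the generic character tables of Steinberg, Simpson--Frame, and their successors), and $|G|$ is likewise a known polynomial. First I would record, family by family, the exact value of $k(G)$: for $\psl{2}{q}$ this is $q+1$ (even $q$), respectively $(q+5)/2$ (odd $q$); for the remaining families one reads off the corresponding class-number polynomials. Dividing by $|G|$ immediately produces the stated formulas for $d(G)$, which accounts for the equalities (or the upper bounds $\leq$ in the $\psl{3}{q}$ and $\psu{3}{q}$ cases, where one slightly overestimates $k(G)$ to obtain a clean monotone expression).

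The substantive content is then the inequality $d(G)<3/p^2$. Here the key observation is that $p$ is \emph{any} prime divisor of $|G|$, so the worst case is the largest admissible $p$; equivalently $3/p^2$ is smallest, and it suffices to verify the inequality for the largest prime $p$ dividing $|G|$. My strategy would be to bound $p$ from above in terms of $q$ using the known factorizations of the group orders. For each family one can exhibit a concrete polynomial upper bound for the largest prime divisor --- for instance, in $\psl{2}{q}$ the relevant primes divide $q(q-1)(q+1)$, so $p\leq q+1$; analogous elementary bounds hold in the other families from the cyclotomic factors appearing in $|G|$. Substituting the largest such $p$ reduces each inequality $d(G)<3/p^2$ to a single-variable polynomial inequality in $q$ that holds for all relevant prime powers $q$, which one checks by elementary estimation together with direct verification of the finitely many small exceptional values (such as the explicitly listed $\psl{3}{3}$).

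I expect the main obstacle to be the case analysis for which prime $p$ maximizes $3/p^2$'s constraint, together with handling the genuinely small cases where the asymptotic polynomial comparison has not yet taken hold. For the larger families ($\ttD{q}$, $\tG{q}$, $\tB{q}$) the group order grows so rapidly relative to $p^2$ that the inequality is comfortable once $q$ is bounded away from its minimum, so the delicate work concentrates in $\psl{2}{q}$, $\psl{3}{q}$, and $\psu{3}{q}$ for small $q$. For $\psu{3}{q}$ the bookkeeping is slightly heavier because of the divisor $d\in\{1,3\}$ depending on whether $3\mid q+1$, so I would split that case according to the value of $d$ and treat the two subcases separately. Throughout, the computations are elementary but numerous; the honest difficulty is organizational rather than conceptual, namely verifying each of the six polynomial inequalities uniformly in $q$ while not overlooking a small exceptional prime power.
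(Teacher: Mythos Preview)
Your plan is correct and matches the paper's approach: both compute $d(G)=k(G)/|G|$ from known class-number formulas, reduce to the largest prime divisor $p$ of $|G|$, bound $p$ in terms of $q$ via the cyclotomic factors of $|G|$, and then check a polynomial inequality in $q$. The one refinement the paper adds that you do not mention is the observation that if $2^{k}+1$ (or $3^{k}+1$) is prime then $k$ must be a power of $2$; for the Suzuki and Ree families this immediately rules out the top cyclotomic factor being prime and sharpens the bound on $p$ (e.g.\ $p\leq (q^{2}+1)/3$ for $\tB{q}$), which is what makes the inequality go through cleanly without a lingering small-$q$ residue.
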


\begin{proof}
Note that $p$ can be chosen to be the largest prime divisor of
$|G|$. (By Burnside's theorem $p$ is at least $5$.) From the formula
for $|G|$, which can be found in~\cite[pages 170-171]{KL}, we can
derive a `good' upper bound for $p$ in terms of $q$. For such it is
worthy to note that if $2^{k}+1$ is a Fermat prime then $k$ is a
power of $2$ and similarly if $3^{k}+1$ is a prime then $k$ is a
power of $2$. Upper bounds or the exact values for $k(G)$ can be
found in~\cite[pages 3049-3050]{FG} and in~\cite{Mac}.

We will prove (4) as a demonstration. Let $G=\tB{q}$ with
$q=2^{2n+1}$ for some integer $n\geq 1$. Note that
$|G|=q^2(q^2+1)(q-1)$. Now $q^2+1=2^{4n+2}+1$ cannot be a prime
since $4n+2$ is not a power of $2$. We deduce that $p\leq(q^2+1)/3$
if $p$ is a prime divisor of $q^2(q^2+1)(q-1)$. Using the fact that
$k(G)=q+3$, it is now straightforward to check that $d(G)<3/p^2$.
\end{proof}

We are now in the position to prove Theorem~\ref{main theorem
p-solvable}.

\begin{proof}[Proof of Theorem~\ref{main theorem p-solvable}] Let $G$ be a finite group with $k(G)\geq(3/p^2)|G|$. Then we have $d(G)\geq 3/p^2$. By
Lemma~\ref{Nagao} we see that $d(G) \leq d(C)$ for any composition
factor $C$ of $G$. Therefore, to prove the first part of the
theorem, it is sufficient to show $d(S) < 3/p^{2}$ for every
non-abelian simple group $S$ whose order is divisible by $p$. By
Lemma~\ref{l1}, we have $d(S) = k(S)/|S| \leq {|G|}^{-1/2}$. This
and Lemma~\ref{l88} imply that $d(S) < 3/p^{2}$ unless $S$ is
isomorphic to one of the groups treated in Lemma~\ref{l89}. In all
these exceptional cases we have $d(S) < 3/p^{2}$ by
Lemma~\ref{l89}.

The second part of the theorem follows from the first part and
Lemma~\ref{l00}.\end{proof}


\section{Solvability}\label{section solvable}

In this section we will prove Theorem~\ref{main theorem solvable}.

\begin{proof}[Proof of Theorem~\ref{main theorem solvable}] Let $G$ be a finite group with $t(G) > 1/4$. Then, by Lemma
\ref{l00}, we have that $d(G) > 1/16$. By using Lemma~\ref{l1}, we
obtain that the index of the Fitting subgroup $\bF(G)$ in $G$ is
less than $256$. Hence if $G$ is solvable then it has Fitting height
at most $4$ by Gap~\cite{GAP}. So we may assume that $G$ is
non-solvable.

Let $S$ be the largest solvable normal subgroup of $G$. Clearly, $S$
has index less than $256$ in $G$ by Lemma~\ref{l1}. Hence $G/S$ is
isomorphic to $\Al_5$, $\Sy_5$, or $\PSL(2,7)$.

Suppose first that $S$ is non-abelian. Then we have $d(S) \leq
5/8$ by Lemma \ref{Gustafson}. We also have $d(G/S) \leq 1/12$.
So, by Lemma \ref{Nagao}, we have $$1/16 < d(G) \leq d(S) d(G/S)
\leq (5/8)(1/12),$$ which is a contradiction. We conclude that $S$
is abelian.

We may assume that $G/S$ is isomorphic to $\Al_5$. For otherwise
$$1/16 < d(G) \leq d(G/S) \leq 7/120$$ which is impossible.

The factor group $G/S \cong \Al_5$ acts naturally on
$\mathrm{Irr}(S)$. Each orbit has size $1$ or at least $5$. Let $r$
be the number of orbits of length $1$. Since every subgroup of $A_5$
has at most $5$ conjugacy classes, we have $k(G) \leq 5r + (|S|-r)$
by Clifford's theorem and Lemma~\ref{l4}. Thus we have \[1/16 < d(G)
\leq \frac{|S|+4r}{60|S|}\] which forces $r > (11/16)|S|$. Since
more than half of the character group $\mathrm{Irr(S)}$ is fixed by
$G/S$, we must have that $G/S$ acts trivially on $\mathrm{Irr}(S)$.
But then, by Brauer's permutation lemma, $G/S$ must act trivially on
$S$ as well, which means that $Z = \bZ(G) = S$.

Let $H$ be the last term in the derived series of $G$. There exists
a solvable normal subgroup $T$ in $H$ with $H/T \cong \Al_5$. Since
$TZ/Z$ is a solvable normal subgroup in $G/Z \cong \Al_5$ we must
have $T \leq Z$. So $H$ is perfect and a central extension of
$\Al_5$. This means that $H$ is either $\Al_5$ or $\SL(2,5)$. In the
former case we have $G = \Al_{5} \times Z$, so assume that $H \cong
\SL(2,5)$. We conclude that $G$ is a central product of the normal
subgroups $H$ and $Z$.

Let the intersection of $H$ and $Z$ be $D = \langle a \rangle$.
This is a central subgroup of order $2$. Put $\overline{G} = H
\times Z$. By~\cite[Lemma 5.2]{K}, the sum $s(\overline{G})$ of
the degrees of those complex irreducible characters of
$\overline{G}$ which have $\langle (a,a) \rangle$ in their kernel
is at least $T(G)$. By the character table of $\SL(2,5)$ it is
easy to see that
$$T(G) \leq s(\overline{G}) = T(\SL(2,5)) \cdot |Z|/2 = 15 |Z|.$$ We conclude that
$$\frac{1}{4} < \frac{T(G)}{|G|} \leq \frac{15|Z|}{60|Z|} = \frac{1}{4},$$ which is
a contradiction. This completes the proof.\end{proof}



\section{Nilpotency}\label{section nilpotent}

The aim of this section is to prove Theorem~\ref{main theorem
nilpotent}. To do that, we need to recall some basic facts on
\emph{isoclinism}.

Two groups $G$ and $H$ are said to be \emph{isoclinic} if there are
isomorphisms $\varphi:G/\bZ(G)\rightarrow H/\bZ(H)$ and $\phi:
G'\rightarrow H'$
such that \begin{align*} \text{ if } \varphi(g_1\bZ(G))&=h_1\bZ(H)\\
\text{ and } \varphi(g_2\bZ(G))&=h_2\bZ(H),\\ \text{ then }
\phi([g_1,g_2])&=[h_1,h_2].\end{align*} This concept was introduced
by Hall in~\cite{Hall} as a structurally motivated classification
for finite groups, especially for $p$-groups. It is well-known that
several characteristics of finite groups such as nilpotency,
supersolvability, or solvability are invariant under isoclinism,
see~\cite{Bioch-Waall}. We will see that the quantity $T(G)/|G|$ is
also invariant under isoclinism.

Isoclinic groups have the same proportions of degrees of irreducible
complex representations. We are aware that this result is known but
we could not track down a formal reference. We refer the reader to
the groupwiki webpage~\cite{webpage} for a proof.

\begin{lemma}\label{lemma propotions of degrees of irreducible characters} Let $G$
and $H$ be isoclinic finite groups and let $d$ be a positive
integer. Suppose that $G$ and $H$ have respectively $m$ and $n$
irreducible characters of degree $d$. Then $m$ is nonzero if and
only if $n$ is nonzero. In that case, we have
\[\frac{m}{n}=\frac{|G|}{|H|}.\]
\end{lemma}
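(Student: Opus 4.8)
The plan is to organize $\Irr(G)$ according to central characters and to reduce the counting of characters of a fixed degree to counting irreducible \emph{projective} representations of the central quotient $G/\bZ(G)$, an object that an isoclinism controls directly. Write $Z=\bZ(G)$. Since $Z$ is central, Schur's lemma shows each $\chi\in\Irr(G)$ restricts to $Z$ as $\chi(1)\lambda$ for a unique linear $\lambda\in\Irr(Z)$, so $\Irr(G)$ is partitioned into the sets $\Irr(G\mid\lambda)$. The standard theory of central extensions gives, for each $\lambda$, a degree-preserving bijection between $\Irr(G\mid\lambda)$ and the irreducible $\alpha_\lambda$-projective representations of $G/Z$, where $\alpha_\lambda\in H^2(G/Z,\CC^{\times})$ is obtained by pushing the extension cocycle $c(x,y)=s(x)s(y)s(xy)^{-1}\in Z$ (for a fixed transversal $s$) forward along $\lambda$. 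Hence, writing $m_d(G)$ for the number of $\chi\in\Irr(G)$ with $\chi(1)=d$, we get $m_d(G)=\sum_{\lambda\in\Irr(Z)}p_d(\alpha_\lambda)$, where $p_d(\alpha)$ is the number of irreducible $\alpha$-projective representations of $G/Z$ of degree $d$; crucially $p_d(\alpha)$ depends only on the cohomology class of $\alpha$, since it is read off from the twisted group algebra.

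The next step is a well-definedness reduction: the class $\alpha_\lambda$ depends only on the restriction $\nu:=\lambda|_{Z\cap G'}$. Indeed, if $\lambda,\lambda'\in\Irr(Z)$ agree on $Z\cap G'$ then $\eta:=\lambda'\lambda^{-1}$ is trivial on $Z\cap G'$, so it extends to a linear character $\tilde\eta$ of $G$ (via the injection $Z/(Z\cap G')\hookrightarrow G/G'$); setting $f(x)=\tilde\eta(s(x))$ one checks $\eta(c(x,y))=f(x)f(y)f(xy)^{-1}$, so $\alpha_{\lambda'}\alpha_\lambda^{-1}$ is a coboundary. Writing $p_d(\nu)$ for the common value $p_d(\alpha_\lambda)$ and noting that exactly $[Z:Z\cap G']$ characters $\lambda$ restrict to each $\nu\in\Irr(Z\cap G')$, I obtain the clean formula \[m_d(G)=[\,\bZ(G):\bZ(G)\cap G'\,]\sum_{\nu\in\Irr(\bZ(G)\cap G')}p_d(\nu).\]

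Finally I would invoke the defining isomorphisms $\varphi:G/\bZ(G)\to H/\bZ(H)$ and $\phi:G'\to H'$. A short argument from the definition shows $\phi$ carries $\bZ(G)\cap G'$ isomorphically onto $\bZ(H)\cap H'$, so composition with $\phi$ gives a bijection $\Irr(\bZ(H)\cap H')\to\Irr(\bZ(G)\cap G')$. The heart of the matter is to prove that under this matching the projective counts agree term by term, i.e.\ $p_d^{G}(\nu\circ\phi)=p_d^{H}(\nu)$; equivalently, that $\varphi$ pulls the class $\alpha^{H}_{\nu}$ back to $\alpha^{G}_{\nu\circ\phi}$ in $H^2(G/\bZ(G),\CC^{\times})$. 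This is exactly where isoclinism enters: since each class depends only on its value on the commutator subgroup and the commutator map $G/\bZ(G)\times G/\bZ(G)\to G'$ is intertwined by $\varphi$ and $\phi$, the two twisted group algebras are isomorphic and share the same multiset of irreducible degrees. Granting this, the sum $P_d:=\sum_\nu p_d(\nu)$ is an isoclinism invariant and \[\frac{m_d(G)}{m_d(H)}=\frac{[\bZ(G):\bZ(G)\cap G']}{[\bZ(H):\bZ(H)\cap H']}.\] Since $|G/\bZ(G)|=|H/\bZ(H)|$ and $|\bZ(G)\cap G'|=|\bZ(H)\cap H'|$ are both isoclinism-invariant, this ratio equals $|\bZ(G)|/|\bZ(H)|=|G|/|H|$, and $m_d(G)$ vanishes iff $P_d=0$ iff $m_d(H)$ vanishes, yielding both assertions. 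The main obstacle I anticipate is precisely the cohomology-matching claim $\varphi^{*}\alpha^{H}_{\nu}=\alpha^{G}_{\nu\circ\phi}$: one must verify that the part of the extension cocycle seen by $\nu$ is captured entirely by the commutator pairing, so that the isoclinism data suffice to identify the classes, the non-alternating part of the cocycle having been washed out by the coboundary reduction of the previous step.
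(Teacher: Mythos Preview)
Your projective–representation approach is sound, and the obstacle you flag can indeed be closed: by the five–term exact sequence the assignment $\lambda\mapsto\alpha_\lambda$ is the transgression $\Irr(Z)\to H^2(G/Z,\CC^\times)$, and under the duality $H^2(Q,\CC^\times)\cong\Hom(H_2(Q,\ZZ),\CC^\times)$ it is dual to the map $H_2(G/Z,\ZZ)\to Z\cap G'$ induced by the commutator pairing $G/Z\times G/Z\to G'$ (via the non-abelian exterior square, or Hopf's formula). Since the isoclinism intertwines precisely this pairing, the classes match and $\varphi^*\alpha^H_\nu=\alpha^G_{\nu\circ\phi}$ follows.

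It is worth noting, however, that the paper does not supply its own proof of this lemma; it cites an external reference. The argument usually given there is more elementary than yours. Given isoclinic $G$ and $H$, set
\[
K=\{(g,h)\in G\times H:\varphi(g\bZ(G))=h\bZ(H)\}.
\]
One checks directly that $K'=\{(g',\phi(g')):g'\in G'\}$, that the projection $K\to G$ has kernel $1\times\bZ(H)\leq\bZ(K)$ meeting $K'$ trivially, and symmetrically for $K\to H$. Now if $N\leq\bZ(K)$ with $N\cap K'=1$, then $N$ embeds in $K/K'$, so every $\lambda\in\Irr(N)$ extends to a linear character $\tilde\lambda$ of $K$, and $\chi\mapsto\chi\tilde\lambda$ is a degree-preserving bijection $\Irr(K\mid 1_N)\to\Irr(K\mid\lambda)$; hence $m_d(K)=|N|\,m_d(K/N)$. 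Applying this to both projections gives $|\bZ(H)|\,m_d(G)=m_d(K)=|\bZ(G)|\,m_d(H)$, and since $|G/\bZ(G)|=|H/\bZ(H)|$ we obtain $m_d(G)/m_d(H)=|G|/|H|$ together with the vanishing equivalence. Your route is more conceptual and explains exactly which second-cohomology data isoclinism preserves; the pullback argument is quicker and avoids twisted group algebras entirely.
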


The next two results are crucial in the proof of Theorems~\ref{main
theorem nilpotent}.

\begin{theorem}\label{T(G)/|G| is invariant under isoclinism } Let $G$
and $H$ be isoclinic finite groups. Then $T(G)/|G|=T(H)/|H|$.
\end{theorem}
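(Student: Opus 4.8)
The statement to prove is that $T(G)/|G| = T(H)/|H|$ whenever $G$ and $H$ are isoclinic finite groups.

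The obvious strategy is to use Lemma~\ref{lemma propotions of degrees of irreducible characters}, which is stated immediately before this theorem and which gives precise information about how the number of irreducible characters of each degree scales under isoclinism. That lemma says that if $G$ has $m$ irreducible characters of degree $d$ and $H$ has $n$, then $m/n = |G|/|H|$ (with $m$ nonzero iff $n$ is nonzero). I would leverage this degree-by-degree correspondence directly.

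The plan is as follows. First I would write $T(G)$ as a sum organized by degree rather than by character: $T(G) = \sum_{d} d \cdot m_d(G)$, where $m_d(G)$ denotes the number of irreducible characters of $G$ of degree $d$, and the sum ranges over all positive integers $d$ (only finitely many terms being nonzero). I would write the analogous expression $T(H) = \sum_{d} d \cdot m_d(H)$ for $H$. Next, I would invoke Lemma~\ref{lemma propotions of degrees of irreducible characters} to conclude that for every $d$, we have $m_d(G)/m_d(H) = |G|/|H|$, i.e. $m_d(G) = (|G|/|H|)\, m_d(H)$; the lemma's statement that $m_d(G)$ and $m_d(H)$ vanish simultaneously ensures this relation holds even for degrees $d$ that occur in neither group (both sides zero). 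Substituting this into the expression for $T(G)$ gives
\[
T(G) = \sum_d d \cdot m_d(G) = \sum_d d \cdot \frac{|G|}{|H|}\, m_d(H) = \frac{|G|}{|H|} \sum_d d \cdot m_d(H) = \frac{|G|}{|H|}\, T(H),
\]
and dividing by $|G|$ yields $T(G)/|G| = T(H)/|H|$, as required.

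This proof is essentially a one-line consequence of the preceding lemma, so there is no real obstacle in the argument itself; the only point requiring a little care is the bookkeeping around degrees that appear in one group but not the other, which the ``$m$ nonzero iff $n$ nonzero'' clause of the lemma handles cleanly. The genuine content is entirely contained in Lemma~\ref{lemma propotions of degrees of irreducible characters}, whose proof (referencing the groupwiki page~\cite{webpage}) encodes the structural fact that isoclinic groups have matching multisets of character degrees up to the uniform scaling factor $|G|/|H|$.
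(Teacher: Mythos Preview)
Your proof is correct and follows essentially the same approach as the paper: both write $T(G)$ as a sum over degrees weighted by multiplicities, invoke Lemma~\ref{lemma propotions of degrees of irreducible characters} to replace $m_d(G)$ by $(|G|/|H|)\,m_d(H)$, and conclude $T(G) = (|G|/|H|)\,T(H)$. The only cosmetic difference is that the paper first notes explicitly that $G$ and $H$ share the same set of character degrees and then sums over that common finite set, whereas you sum over all $d$ and handle the vanishing terms via the ``nonzero iff nonzero'' clause.
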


\begin{proof} By Lemma~\ref{lemma propotions of degrees of irreducible
characters}, we know that $G$ and $H$ have the same irreducible
character degrees. So we assume that $d_1,d_2,...,d_k$ are all
character degrees of $G$ and $H$. Let $m_{i}$ and $n_{i}$ denote the
multiplicity of the degree $d_i$ of $G$ and $H$, respectively. We
have, by Lemma~\ref{lemma propotions of degrees of irreducible
characters},
$$T(G)=\sum_{i=1}^k m_id_i=\sum_{i=1}^k
n_i\frac{|G|}{|H|}d_i=\frac{|G|}{|H|}T(H),$$ and the theorem
follows.
\end{proof}

From this proof it also follows that $d(G) = d(H)$ whenever $G$ and
$H$ are isoclinic finite groups, a fact proved earlier by Lescot
\cite{Lescot}.

A \emph{stem group} is defined to be a group whose center is
contained inside its derived subgroup. It is known that every group
is isoclinic to a stem group and if we restrict to finite groups, a
stem group has the minimum order among all groups isoclinic to it,
see~\cite{Hall} for more details.

\begin{lemma}\label{isoclinic lemma} For every finite group $G$,
there is a finite group $H$ isoclinic to $G$ such that $|H|\leq |G|$
and $\bZ(H)\subseteq H'$.
\end{lemma}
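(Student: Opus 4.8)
The statement is a repackaging of the facts about stem groups recalled just above, so the plan is simply to exhibit a stem group in the isoclinism class of $G$ and read off the two conclusions. The quickest route is to invoke directly the cited results of Hall \cite{Hall}: every finite group is isoclinic to a stem group, and among the finite groups in a given isoclinism class the stem groups have minimal order. Taking $H$ to be such a stem group, the condition $\bZ(H)\subseteq H'$ holds by the definition of ``stem,'' and $|H|\le|G|$ holds because $G$ itself lies in the class while $H$ is of minimal order in it. Should a self-contained argument be wanted, I would instead \emph{define} $H$ to be a group of minimal order among all finite groups isoclinic to $G$ (this minimum exists, as the orders form a nonempty set of positive integers, and $|H|\le|G|$ is then immediate); the entire content is then reduced to proving that a minimal-order group in an isoclinism class must be stem.

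For that reduction I would argue by contradiction: suppose $\bZ(H)\not\subseteq H'$, pick $z\in\bZ(H)\setminus H'$, and produce a group isoclinic to $H$ of \emph{strictly} smaller order, contradicting minimality. Set $Q:=H/\bZ(H)$ and $D:=\bZ(H)\cap H'$, so that $D\subsetneq\bZ(H)$. The isoclinism type of $H$ is encoded by $Q$, by $H'$, and by the commutator pairing $Q\times Q\to H'$, $(x\bZ(H),y\bZ(H))\mapsto[x,y]$, whose values lie in $D$. The idea is to realize exactly this data by a central extension of $Q$ whose kernel is the smaller group $D$ rather than all of $\bZ(H)$; the resulting group $H_{0}$ is isoclinic to $H$, satisfies $\bZ(H_{0})=D\subseteq H_{0}'$, and has order $|Q|\,|D|=|H|\,|D|/|\bZ(H)|<|H|$, the desired contradiction.

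The main obstacle is precisely this last construction, and it is genuinely external: the smaller isoclinic group $H_{0}$ is in general neither a subgroup nor a quotient of $H$. The sharpest illustration is the modular group $M_{16}$ of order $16$, for which $\bZ(M_{16})\cong C_{4}$ while $M_{16}'\cong C_{2}$; its isoclinism class contains only the stem groups $D_{8}$ and $Q_{8}$ of order $8$, and one checks that $M_{16}$ admits neither as a section. Consequently one cannot shrink the center by a single central quotient (every nontrivial central subgroup of $M_{16}$ already meets the derived subgroup) nor by passing to a complementary subgroup, and the passage must be carried out at the level of the commutator pairing, i.e.\ by modifying the defining $2$-cocycle so as to reduce the central kernel from $\bZ(H)$ to $D$. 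Extra care is needed when $H$ has nilpotency class greater than $2$, so that $H'\not\subseteq\bZ(H)$ and $H'$ is itself part of the data rather than a subgroup of the kernel. Making this rigorous is exactly Hall's stem-group construction \cite{Hall}, which I would cite to complete the proof.
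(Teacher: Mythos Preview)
Your proposal is correct and matches the paper's treatment: the paper does not give a proof of this lemma at all but simply records, in the paragraph immediately preceding it, that every group is isoclinic to a stem group and that stem groups have minimal order in their isoclinism class, citing Hall~\cite{Hall}. Your argument invokes exactly these facts in exactly this way; the additional discussion you give (the contradiction sketch and the $M_{16}$ example) is extra context that the paper omits, but your core approach---cite Hall's stem-group theorem---is identical to the paper's.
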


\begin{proof}[Proof of Theorem~\ref{main theorem nilpotent}] Let $G$ be a finite group with $T(G)>\sqrt{3/8}|G|$. Then we have $d(G) > 3/8$
by Lemma~\ref{l00}. Using the table in \cite[Page 246]{Rusin}, we
deduce that one of the following cases holds.
\begin{enumerate}
\item Both $G/\bZ(G)$ and $G'$ are $2$-groups;

\item Both $G/\bZ(G)$ and $G'$ are $3$-groups;

\item $G/\bZ(G) \cong \Sy_3$ and $|G'| = 3$;

\item $G/\bZ(G) \cong D_{10}$ and $|G'| = 5$.
\end{enumerate}

By Lemma \ref{isoclinic lemma}, we may assume that $\bZ(G) \leq G'$.
Thus $G$ is a $2$-group in case (1) and is a $3$-group in case (2).
In cases (3) and (4) the center of $G$ cannot coincide with $G'$ so
$\bZ(G) = 1$. This means that $G$ must be isomorphic to $\Sy_3$ and
to $D_{10}$ in the respective cases. The proof is now
complete.\end{proof}


\section{Supersolvability}\label{section supersovable}

In this section we will prove Theorem \ref{main theorem
supersolvable}. We first recall a well-known lemma, which can be
found in~\cite[VI.8.6]{Huppert}.

\begin{lemma}\label{if N<Frattini(G) and G/N is supersolvable} Let
$N$ be a normal subgroup of $G$ that is contained in the Frattini
subgroup of $G$. If $G/N$ is supersolvable, then $G$ is
supersolvable.
\end{lemma}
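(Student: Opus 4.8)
The plan is to prove the final statement, Lemma~\ref{if N<Frattini(G) and G/N is supersolvable}, which asserts that if $N \trianglelefteq G$ lies in the Frattini subgroup $\Phi(G)$ and $G/N$ is supersolvable, then $G$ is supersolvable. Since the excerpt explicitly says this lemma is well-known and cites \cite[VI.8.6]{Huppert}, the natural approach is to supply the short standard argument rather than anything novel. I would work directly from the definition: a finite group is supersolvable precisely when it has a chief series all of whose factors are cyclic (equivalently, a normal series with cyclic factors that is invariant under $G$). The strategy is to lift such a series from $G/N$ up to $G$ by adjoining suitable refinements inside $N$.

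\begin{proof}
First I would recall the characterization that a finite group is supersolvable if and only if every chief factor is cyclic (of prime order). Since $G/N$ is supersolvable, all of its chief factors are cyclic. By the correspondence theorem, a chief series of $G/N$ lifts to a normal series
\[
N = N_0 \trianglelefteq N_1 \trianglelefteq \cdots \trianglelefteq N_r = G
\]
in which each $N_i \trianglelefteq G$ and each factor $N_{i+1}/N_i$ (for $i \geq 1$, i.e.\ above $N$) is a chief factor of $G/N$ and hence cyclic of prime order. It remains only to refine the bottom segment, namely the part of a chief series running from $1$ up to $N$, so that its factors are also cyclic.

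The key step is to show that $N$ itself, as a normal subgroup contained in $\Phi(G)$, admits a $G$-invariant series with cyclic factors. Here I would use the fact that $\Phi(G)$ is nilpotent: this is a standard consequence (the Frattini subgroup of any finite group is nilpotent, since a Sylow subgroup of $\Phi(G)$ is normal in $G$ by the Frattini argument). A nilpotent normal subgroup $N$ of $G$ is contained in the Fitting subgroup, and for the purpose at hand one knows that a nilpotent normal subgroup always possesses a $G$-chief series whose factors are elementary abelian. The remaining obstacle — and the genuinely substantive point — is to upgrade ``elementary abelian chief factor'' to ``cyclic chief factor'' using the Frattini hypothesis. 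The mechanism is that any $G$-chief factor $W$ lying inside $\Phi(G)$ cannot carry a nontrivial irreducible $G$-module action that is free enough to have dimension greater than one; more concretely, if some chief factor inside $N$ had order $p^d$ with $d \geq 2$, one produces a maximal subgroup of $G$ failing to contain the relevant generators, contradicting the defining property of $\Phi(G)$ as the intersection of all maximal subgroups. I expect the careful verification of the one-dimensionality of chief factors below $\Phi(G)$ to be the main obstacle, and it is exactly the place where the hypothesis $N \subseteq \Phi(G)$ (rather than merely $N$ normal) is indispensable.

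Granting that $N$ has a $G$-invariant series $1 = M_0 \trianglelefteq M_1 \trianglelefteq \cdots \trianglelefteq M_s = N$ with each $M_{j+1}/M_j$ cyclic, I would splice this below the lifted series above to obtain a single $G$-invariant normal series from $1$ to $G$ with all factors cyclic. By the characterization of supersolvability this shows $G$ is supersolvable, completing the proof. Alternatively, since the result is quoted as known, a legitimate and shorter route is simply to cite \cite[VI.8.6]{Huppert} directly; I would present the argument above as the self-contained justification so the paper remains as elementary and classification-free as the surrounding sections.
\end{proof}
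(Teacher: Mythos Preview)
The paper gives no proof at all for this lemma; it simply records it as well-known and cites \cite[VI.8.6]{Huppert}. So there is nothing to compare at the level of approach.

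That said, your attempted self-contained argument has a real gap exactly where you flag it. The whole content of the lemma is the claim that every $G$-chief factor lying inside $\Phi(G)$ is cyclic, and your justification (``one produces a maximal subgroup of $G$ failing to contain the relevant generators, contradicting the defining property of $\Phi(G)$'') is not a proof. Every maximal subgroup of $G$ already contains $\Phi(G)$ and hence all of $N$, so no contradiction of that shape is available. Moreover, the auxiliary fact one \emph{can} extract easily---that $\Phi(G)$ centralizes any such chief factor, so the faithful irreducible action on it factors through the supersolvable group $G/\Phi(G)$---is not enough by itself: a supersolvable group (e.g.\ $\Sy_3$ on $\mathbb{F}_2^{\,2}$) can act faithfully and irreducibly on a module of rank greater than one. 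So the step you left open is genuinely the entire difficulty, and your sketch does not bridge it.

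If you want a short classification-free proof rather than a bare citation, the cleanest route is via Huppert's characterization that a finite group is supersolvable if and only if every maximal subgroup has prime index. Since every maximal subgroup of $G$ contains $\Phi(G)\supseteq N$, the maximal subgroups of $G$ correspond bijectively, with the same indices, to those of $G/N$; the latter all have prime index because $G/N$ is supersolvable, hence so do the former, and $G$ is supersolvable. This replaces your unfinished chief-factor argument with a one-line reduction.
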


The next two lemmas are crucial in the proof of Theorem~\ref{main
theorem supersolvable}.

\begin{lemma}\label{T(G)>1/2|G|, then T(G/N)>1/2|G/N|} Let $N$ be a
normal subgroup of $G$ that is contained in $G'$. If
$T(G)>(1/2)|G|$, then $T(G/N)>(1/2)|G/N|$.
\end{lemma}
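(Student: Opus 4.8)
The plan is to relate $T(G)$ and $T(G/N)$ by regarding $\Irr(G/N)$, via inflation, as precisely the set of those $\chi \in \Irr(G)$ whose kernel contains $N$. Writing $\Lambda$ for the complementary set $\{\chi \in \Irr(G) : N \not\subseteq \ker\chi\}$, I would split both the degree sum and the sum-of-squares formula along this partition to obtain
\[ T(G) = T(G/N) + \sum_{\chi \in \Lambda} \chi(1), \qquad |G| = |G/N| + \sum_{\chi \in \Lambda} \chi(1)^2, \]
where the second identity comes from applying the relation $\sum_{\chi \in \Irr(H)} \chi(1)^2 = |H|$ to both $H = G$ and $H = G/N$ and subtracting.

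The one place where the hypothesis $N \leq G'$ enters — and the key observation of the whole argument — is that every linear character of $G$ has $G'$, and hence $N$, in its kernel. Consequently every character in $\Lambda$ is nonlinear, so $\chi(1) \geq 2$ for all $\chi \in \Lambda$. This yields the termwise inequality $\chi(1)^2 \geq 2\chi(1)$, and summing over $\Lambda$ gives
\[ \sum_{\chi \in \Lambda} \chi(1) \leq \tfrac{1}{2} \sum_{\chi \in \Lambda} \chi(1)^2 = \tfrac{1}{2}\bigl(|G| - |G/N|\bigr). \]

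Substituting this bound into the first identity produces $T(G) \leq T(G/N) + \tfrac{1}{2}(|G| - |G/N|)$, which I would rearrange as
\[ T(G/N) \geq T(G) - \tfrac{1}{2}|G| + \tfrac{1}{2}|G/N|. \]
The hypothesis $T(G) > \tfrac{1}{2}|G|$ then immediately forces $T(G/N) > \tfrac{1}{2}|G/N|$, as desired. There is no genuine obstacle beyond recognizing that $N \leq G'$ is exactly the condition guaranteeing that $\Lambda$ consists of nonlinear characters; once that is in hand, the elementary estimate $\chi(1)^2 \geq 2\chi(1)$ does all the work and the remainder is bookkeeping. I note that the argument uses no deep input, consistent with the claim in the introduction that the supersolvability result is classification-free.
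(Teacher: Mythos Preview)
Your proof is correct and is essentially the same argument as the paper's: both hinge on the observation that $N\leq G'$ forces every linear character of $G$ to contain $N$ in its kernel, and then use the elementary inequality $\chi(1)^2\geq 2\chi(1)$ for the remaining (nonlinear) characters. The only difference is organizational---the paper first rewrites $T(G)>\tfrac12|G|$ as $\sum_{\chi(1)\geq 3}(\chi(1)^2-2\chi(1))<|G:G'|$ and then observes this inequality passes to $G/N$, whereas you bound $T(G)-T(G/N)$ directly; the content is the same.
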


\begin{proof} We have
\begin{align*}T(G)>\frac{1}{2}|G|&\Leftrightarrow \sum_{\chi\in\Irr(G)}\chi(1)>\frac{1}{2}\sum_{\chi\in\Irr(G)}\chi(1)^2\\
&\Leftrightarrow \sum_{\chi\in\Irr(G)}(\chi(1)^2-2\chi(1))<0\\
&\Leftrightarrow \sum_{\chi\in\Irr(G),\chi(1)\geq3}(\chi(1)^2-2\chi(1))<\sum_{\chi\in\Irr(G),\chi(1)=1}(2\chi(1)-\chi(1)^2)\\
&\Leftrightarrow
\sum_{\chi\in\Irr(G),\chi(1)\geq3}(\chi(1)^2-2\chi(1))<
|\{\chi\in\Irr(G),\chi(1)=1\}|\\ &\Leftrightarrow
\sum_{\chi\in\Irr(G),\chi(1)\geq3}(\chi(1)^2-2\chi(1))< |G:G'|.
\end{align*}
Since $N\subseteq G'$, we see that
$[(G/N):(G/N)']=[(G/N):(G'/N)]=[G:G']$. It follows, as every
irreducible character of $G/N$ can be considered as an irreducible
character of $G$, that
\[\sum_{\chi\in\Irr(G/N),\chi(1)\geq3}(\chi(1)^2-2\chi(1))<
|(G/N):(G/N)'|,\] which implies that $T(G/N)>(1/2)|G/N|$, as
desired.
\end{proof}

The following lemma is a consequence of a result of Aschbacher and
Guralnick~\cite{AG}.

\begin{lemma}\label{non-coprime k(GV) problem} Let $p$ be a prime
and let $V$ be a finite dimensional vector space over the field of
$p$ elements. Let $G$ be a group acting faithfully and irreducibly
on $V$. Then $|G:G'|<|V|$.
\end{lemma}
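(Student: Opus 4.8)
The plan is to deduce the bound directly from the Aschbacher--Guralnick result of \cite{AG}, after recasting the quantity $|G:G'|$ in a convenient form. First I would record that $|G:G'|$ is exactly the number of linear (degree-one) characters of $G$, equivalently the order of the largest abelian quotient $A := G/G'$. The content of the lemma is that a \emph{faithful irreducible} action on $V$ forces this abelian quotient to be small, and the input from \cite{AG} is precisely a bound of this shape for groups carrying a faithful module. So the first step is simply to verify that the hypotheses of the relevant Aschbacher--Guralnick theorem are met here: we have $G \le \GL(V)$ with $V$ a faithful irreducible $\mathbb{F}_pG$-module, and in particular no coprimality assumption is available, which is exactly the ``non-coprime'' situation the lemma is designed to handle.

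Before invoking \cite{AG} I would dispose of the abelian case by hand, since it is clean and pinpoints where strictness comes from. If $G$ is abelian then $G'=1$, and by Schur's lemma $\End_G(V)$ is a finite field $\mathbb{F}_{p^d}$ with $V$ one-dimensional over it, so $G$ embeds in $\mathbb{F}_{p^d}^{\times}$ and
$|G:G'| = |G| \le p^d - 1 < p^d \le |V|$.
This already gives the strict inequality, and it suggests that in general the strictness is accounted for by the omission of the zero vector. For the non-abelian case the plan is to reduce to the precise form in which the Aschbacher--Guralnick statement is cast. If their theorem is phrased for primitive (or quasi-primitive) modules, I would use Clifford theory: either $V$ is primitive, so that \cite{AG} applies directly, or $V = \Ind_H^G W$ is imprimitively induced, $G$ permutes a system of blocks, and one runs an induction on $\dim V$, bounding $|G:G'|$ by combining the contribution of the block stabilizer (to which the inductive hypothesis or \cite{AG} applies on the smaller faithful module $W$) with that of the permutation action of $G$ on the blocks. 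If instead \cite{AG} already covers arbitrary faithful irreducible (or completely reducible) modules, this reduction is unnecessary and the lemma follows from a single citation once the hypotheses above are checked.

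The main obstacle I anticipate is matching the exact hypotheses and output of the Aschbacher--Guralnick theorem: their results are fundamentally cohomological (bounds on $H^1$ and on abelian sections of groups acting faithfully), so the real work is extracting the clean inequality $|G:G'| < |V|$ --- with strict inequality and without coprimality --- from their statement, and confirming that no small exceptional configuration forces equality or failure. Should an imprimitive reduction be required, the delicate point will be controlling how $|G:G'|$ behaves under induction, since the abelianization of $G$ does not restrict transparently to the block stabilizer; there one must argue that the combined abelian contributions of the block stabilizer and of the permutation action still stay strictly below $|V|$.
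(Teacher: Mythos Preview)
Your plan---cite Aschbacher--Guralnick once the hypotheses are verified---is exactly the paper's approach. What you have not pinned down is the specific hypothesis of \cite[Theorem~3]{AG}: it applies to any faithful completely reducible $\mathbb{F}_pG$-module provided $O_p(G)=1$, with no primitivity assumption. Your imprimitive reduction via Clifford theory and your separate treatment of the abelian case are therefore unnecessary detours, and the worry about extracting strictness does not arise since the strict inequality is already in the conclusion of \cite[Theorem~3]{AG}. The entire content of the paper's proof is the verification that $O_p(G)=1$: if $N\trianglelefteq G$ is a nontrivial $p$-subgroup, then by Clifford's theorem $V$ is completely reducible over $N$, but every irreducible $\mathbb{F}_pN$-module is trivial, so $N$ acts trivially on $V$, contradicting faithfulness. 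With $O_p(G)=1$ in hand, \cite[Theorem~3]{AG} gives $|G:G'|<|V|$ immediately.
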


\begin{proof} If $N$ is a non-trivial normal subgroup of G then by Clifford's
theorem $V$ is a completely reducible $N$-module. Therefore, if
$N$ is furthermore a $p$-subgroup then the only irreducible
$N$-submodule of $V$ is the trivial module. So $V$ must be trivial
as an $N$-module. We conclude that $O_p(G)$, the maximal normal
$p$-subgroup of $G$, is trivial. Now the lemma follows
by~\cite[Theorem 3]{AG}.
\end{proof}

\begin{remark} The proof of Aschbacher and Guralnick~\cite[Theorem 3]{AG}
requires the finite simple group classification. However, when the
group $G$ is solvable, their proof actually does not depend on the
classification. Our proof of Theorem~\ref{main theorem
supersolvable} below uses Lemma~\ref{non-coprime k(GV) problem} only
in the case where $G$ is solvable and therefore it neither does not
depend on the classification. We thank the referees for bringing
this discussion to our attention.
\end{remark}

We are now ready to prove Theorem \ref{main theorem supersolvable}.

\begin{proof}[Proof of Theorem~\ref{main theorem supersolvable}] Assume, to the contrary, that Theorem~\ref{main theorem
supersolvable} is not true and let $G$ be a minimal counterexample.
In particular, we have $T(G)>(1/2)|G|$ and therefore $G$ is solvable
by~\cite[Theorem~A]{Tong-Viet}. If $G'=1$ then $G$ is abelian and we
are done. So we can assume that $G'$ is nontrivial. Let $N$ be a
minimal normal subgroup of $G$ with $N\subseteq G'$, then $N$ must
be elementary abelian by the solvability of $G$. Also, using
Lemma~\ref{T(G)>1/2|G|, then T(G/N)>1/2|G/N|}, we deduce that
$T(G/N)>(1/2)|G/N|$, which implies that $G/N$ is supersolvable by
the minimality of $G$.

Again as $G$ is not supersolvable, Lemma~\ref{if N<Frattini(G) and
G/N is supersolvable} implies that $N$ is not contained in the
Frattini subgroup of $G$. Hence, $N$ is not contained in a maximal
subgroup $M$ of $G$ so that $NM=G$. Since $N$ is abelian, we see
that $N\cap M\lhd G$. Now the minimality of $N$ and the fact that
$N$ is not contained in $M$ imply that $N\cap M=1$. Equivalently,
\[G\cong N\rtimes M.\]

If $N\subseteq \bZ(G)$ then $N$ would be cyclic and hence $G$ is
supersolvable. Therefore, we assume that $N\nsubseteq \bZ(G)$ or
equivalently $[N,M]>1$. It is clear that $[N,M]$ is normal in $G$.
The minimality of $N$ then implies that $[N,M]=N$. Therefore, no
non-principal character of $N$ is fixed under the conjugation action
of $M$.

If there is a linear character $\alpha$ of $N$ that is in an
$M$-orbit of size 2, then the stabilizer $\Stab_M(\alpha)$ of
$\alpha$ in $M$ is a normal subgroup of $M$ of index $2$. The
conjugation action of $M/\Stab_M(\alpha)$ on $\Irr(N)$ is
irreducible and has no nontrivial fixed point. Therefore $|N|$ is
a prime so that $N$ is cyclic. This would imply that $G$ is
supersolvable since $G/N$ is supersolvable, a contradiction.

From now on we can assume that every nontrivial orbit of the action
of $M$ on $\Irr(N)$ has size at least $3$. It follows that every
nontrivial orbit of the action of $G$ on $\Irr(N)$ has size at least
$3$. Now consider the group $C = \bC_{M}(N)$. This is normal in $M$
and centralizes $N$, so it is normal in $G = MN$. Hence $K = N
\times C$ is a normal subgroup of $G$. The subset $$S = \Irr(K)
\setminus \{ 1 \otimes \chi : \chi \in \Irr(C) \}$$ of $\Irr(K)$ is
$G$-invariant and every $G$-orbit has size at least $3$. By Clifford
theory, each $G$-orbit of size $d$ in $S$ produces at least one
irreducible character of $G$ of degree divisible by $d$ and
different $G$-orbits in $S$ produce different characters of $G$.
Thus we have that
\[\sum_{d\geq 3}d\cdot n_d(G)\geq (|N|-1) \cdot T(C),\]
where $n_d(G)$ denotes the number of irreducible complex
characters of $G$ of degree $d$. Since $d\leq d^2-2d$ for every
$d\geq 3$, it follows that
\[\sum_{d\geq 3}(d^2-2d)\cdot n_d(G)\geq (|N|-1) \cdot T(C).\] Equivalently,
\[\sum_{\chi\in\Irr(G),\chi(1)\geq3}(\chi(1)^2-2\chi(1))\geq (|N|-1) \cdot T(C).\]

Recall that $N$ is elementary abelian. Therefore, we can consider
$N$ as a finite dimensional vector space over a field of $p$
elements for some prime $p$. Also, since $N$ is a minimal normal
subgroup of $G$, the conjugation action of $G$ on $N$ is
irreducible. In particular, the factor group $M/C$ can be
considered as a group of linear transformations acting faithfully
and irreducibly on $N$. Since the group $M'C/C$ is normal in $M/C$
and the quotient is abelian, we see by Lemma~\ref{non-coprime
k(GV) problem} that $|N|-1 \geq |M|/|M'C|$.

We claim that $(|M|/|M'C|) \cdot T(C) \geq |M:M'| = |G:G'|$. This
would be sufficient for our purposes since this would give
\[\sum_{\chi\in\Irr(G),\chi(1)\geq3}(\chi(1)^2-2\chi(1))\geq
|G:G'|,\] and as we have already seen in the proof of
Lemma~\ref{T(G)>1/2|G|, then T(G/N)>1/2|G/N|}, this inequality is
equivalent to \[T(G)\leq \frac{1}{2}|G|,\] which violates the
hypothesis.

To prove the claim it is sufficient to prove the inequality since
the equality follows from $N \leq G'$. For that it is sufficient to
verify $$T(C) \geq |M'C|/|M'| = |M'C/M'| = |C:(M' \cap C)|.$$ But
$M' \cap C \geq C'$ and so $T(C) \geq |C:C'| \geq |C:(M' \cap C)|$,
as desired.\end{proof}


\section*{Acknowledgement} We are grateful to the anonymous referees
for excellent comments and insightful suggestions that have
significantly improved the exposition of the paper.


\end{document}